\newtheorem{theorem}{Theorem}[section]
\newtheorem{proposition}[theorem]{Proposition}
\newtheorem{corollary}[theorem]{Corollary}
\theoremstyle{definition}
\newtheorem{definition}[theorem]{Definition}
\theoremstyle{remark}
\newtheorem{remark}[theorem]{Remark}
\newtheorem{example}[theorem]{Example}
\newcommand{\ba}{\setminus}
\newcommand{\la}{\lambda}
\newcommand{\tcon}{\rightthreetimes}
\newcommand{\ident}{\odot}
  \DeclareMathOperator{\cross}{cr}
\DeclareMathOperator{\white}{wh}
\DeclareMathOperator{\black}{bl}
\DeclareMathOperator{\total}{tot}
\begin{document}

\title{Edge colourings and topological graph polynomials}

\author{Joanna A. Ellis-Monaghan  \\  \textit{Department of Mathematics} \\ \textit{Saint Michael's College}\\  \textit{USA} \\ \texttt{jellis-monaghan@smcvt.edu} \\
\and
Louis H. Kauffman \\ \textit{Department of Mathematics, Statistics and Computer Science} \\  \textit{University of Illinois at Chicago}\\ 
\textit{USA}\\
\textit{and}\\
\textit{Department of Mechanics and Mathematics}\\
\textit{Novosibirsk State University}\\
\textit{Russia}
 \\ \texttt{kauffman@uic.edu} \\
\and
Iain Moffatt \\ \textit{Department of Mathematics} \\
\textit{Royal Holloway,
University of London}
\\ 
\textit{United Kingdom} \\
\texttt{iain.moffatt@rhul.ac.uk}
}




\date{}


\maketitle


\begin{abstract}
A $k$-valuation is a special type of edge $k$-colouring of a medial graph. Various graph polynomials, such as the Tutte, Penrose, Bollob\'as--Riordan, and transition polynomials, admit combinatorial interpretations and evaluations as  weighted counts of $k$-valuations.  In this paper, we consider a multivariate generating function of $k$-valuations. We show that this is a polynomial in $k$ and hence defines a graph polynomial. We then show that the resulting polynomial has several desirable properties,  including a recursive deletion-contraction-type definition, and specialises to the graph polynomials mentioned above. It also offers an alternative extension of the Penrose polynomial from plane graphs to graphs in other surfaces. 
\end{abstract}

\section{Introduction}\label{dyh}
Graph polynomials, including the many recent topological graph polynomials, encode combinatorial information.  The challenge lies in extracting this information. 
We present a new topological graph polynomial, and show how it counts edge colourings. We carry over this combinatorial interpretation to some other well-known topological graph polynomials. This new graph polynomial was motivated by work of Penrose, as follows.

Penrose's highly influential paper \cite{MR0281657} gives a graphical calculus for computing the number of proper edge 3-colourings of plane graphs. 
The work \cite{MR3665180}  built upon Penrose's by extending this graphical calculus to give a method for counting edge 3-colourings of any (not necessarily planar) cubic graph. This calculus is applied to an immersion of the graph in the plane, but it does not depend upon the particular immersion chosen. 

On the other hand, Penrose's graphical calculus led to the introduction of the Penrose polynomial, $P(G;\lambda)$, of plane graphs \cite{MR1428870}. This was extended to graphs in any surface in \cite{MR2994409}. 
For cubic \emph{plane} graphs, the Penrose polynomial evaluated at $\lambda=3$ gives the number of proper edge 3-colourings. This result, however, does not hold for graphs embedded in surfaces of higher genus. 

Thus Penrose's calculus for counting edge 3-colourings of cubic planar graphs has two different topological extensions: one polynomial invariant of embedded graphs that counts edge 3-colourings only in the plane case, 
the other a generalised Penrose graphical calculus for immersed graphs that counts edge 3-colourings for any cubic plane graph via an immersion of it in the plane~\cite{MR3665180}. 
Together, these extensions hint at the existence of  a polynomial invariant of graphs in surfaces that counts edge 3-colourings of all cubic  graphs, not just plane ones. 

In this paper we find such a polynomial. We approach this via a
 generating function of $k$-valuations, which are  special kinds of edge colourings.  We show that this generating function is a polynomial in $k$.  
 We relate the resulting topological graph  polynomial $\Omega$  to the Penrose polynomial, as well as to other topological graph polynomials, and use it to find   combinatorial information in topological graph polynomials.

The $k$-valuations central to this investigation are colourings of medial graphs. 
The construction of the medial graph $G_m$ of an embedded graph $G$ is well-known in graph theory. However, there are different ways in the literature for constructing $G_m$ when $G$ has isolated vertices. Here we allow  medial graphs to have `free-loops', which arise from isolated vertices. (See Remark~\ref{remfl} for a discussion of the case where free-loops are not considered, where it is seen that the results presented here are easily adapted to this situation.)  A \emph{free-loop} is a circular edge in a graph that has no incident vertex. The definition of a medial graph we use here is as follows. 
  Let $G$ be a graph embedded in a surface $\Sigma$. The \emph{medial graph}, $G_m$, of $G$ is the 4-regular graph (with free-loops) embedded in $\Sigma$ obtained by placing a vertex on each edge of $G$ then adding edges as curves on the surface that follow the face boundaries between vertices. If $G$ has isolated vertices, then, for each isolated vertex, add a free-loop as a curve that follows its boundary (or technically, the boundary of a regular neighbourhood of the vertex).
The graph $G$ does not form part of the medial graph. 
Although medial graphs are most commonly studied in the setting of plane graphs, they are defined for graphs embedded in any surface, including those with boundary, or with non-cellular embeddings.

Each vertex of  $G \subset \Sigma$  corresponds to a unique face of $G_m \subset \Sigma$. (Here by a  \emph{face} we mean a component of $\Sigma \ba G_m$.) This correspondence gives rise to a  natural face 2-colouring of $G_m$ which is obtained by colouring the  faces of $G_m$ that correspond to vertices of $G$  black, and the remaining faces  white. This is called the \emph{canonical checkerboard colouring} of $G_m$. 

Let $k$ be a natural number.  A  \emph{$k$-valuation} of $G_m$  is an edge $k$-colouring of $G_m$ such that each vertex is incident to an even number (possibly zero) of edges of each colour.  Here we  denote $k$-valuations by $\phi$ and consider them as mappings from  to $[k]=\{1,\ldots, k\}$. 

Since several important  graph polynomials have been shown to count $k$-valuations, they play an important role in the theory of topological graph polynomials. To describe these interpretations we need a little more terminology.

A $k$-valuation of a canonically checkerboard coloured medial graph $G_m$ yields four possible \emph{configurations} of colours at each  vertex, which we term \emph{white}, \emph{black}, \emph{crossing}, or \emph{total},  as in Figure~\ref{f.kval}. 

\begin{figure}[ht]
\centering
\begin{tabular}{ccccccc}
\labellist \small\hair 2pt
\pinlabel {$i$}  at 23 64
\pinlabel {$j$}  at 23 9
\pinlabel {$j$}  at 48 9
\pinlabel {$i$}  at 48 64
\endlabellist
 \includegraphics[width=1.2cm]{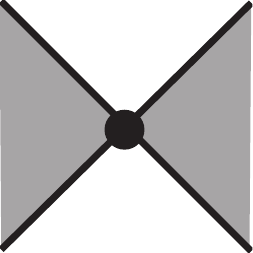}
  &  \quad\quad\quad &
  \labellist \small\hair 2pt
\pinlabel {$i$}  at 23 64
\pinlabel {$i$}  at 23 9
\pinlabel {$j$}  at 48 9
\pinlabel {$j$}  at 48 64
\endlabellist
 \includegraphics[width=1.2cm]{m1}
  & \quad\quad\quad&
  \labellist \small\hair 2pt
\pinlabel {$i$}  at 23 64
\pinlabel {$j$}  at 23 9
\pinlabel {$i$}  at 48 9
\pinlabel {$j$}  at 48 64
\endlabellist
 \includegraphics[width=1.2cm]{m1}
  & \quad\quad\quad&
  \labellist \small\hair 2pt
\pinlabel {$i$}  at 23 64
\pinlabel {$i$}  at 23 9
\pinlabel {$i$}  at 48 9
\pinlabel {$i$}  at 48 64
\endlabellist
 \includegraphics[width=1.2cm]{m1},
  \\
white && black &&crossing &&  total
\end{tabular}
\caption{Classifying $k$-valuations at a vertex, where $i\neq j$}
\label{f.kval}
\end{figure}

We let $\white(\phi)$, $\black(\phi)$,  $\cross(\phi)$ and $\total(\phi)$ denote the numbers of white vertices, black vertices,  crossing vertices and total vertices, respectively, in a $k$-valuation $\phi$.

As noted above, several graph polynomials have combinatorial interpretations as counts of $k$-valuations. The following theorem summarises what is known. In the theorem, $T(G;x,y)$ denotes the Tutte polynomial \cite{MR0061366}, $R(G;x,y,z)$ the Bollob\'as--Riordan polynomial \cite{MR1851080,MR1906909}, $P(G;\la)$  the (topological) Penrose polynomial \cite{MR1428870,MR2994409,MR0281657}, and $  Q(G; ( \alpha, \beta , \gamma ), t) $  the topological transition polynomial \cite{MR2869185,MR1096990}. We do not recall the definitions of these polynomials here since we do not  need their details.

\begin{theorem}[\cite{MR1428870,MR3326433,MR2994409,MR1096990,KP}] 
\label{t1}
The following identities hold.
\begin{align}
\label{e.qk4} T(G; k/b+1 ,kb+1) &=  \sum   (b+1)^{\total(\phi)}     b^{\white(\phi)},
\\
\label{e.intro1}
 k^{\kappa(G)}  R(G;k+1,k,1/k,1) &= \sum 2^{\total(\phi)},
\\
\label{e.qk1}     k^{\kappa(G)}  b^{r(G)}   R\Big(G;  (k+b)/b, bk, 1/k\Big)    &= \sum   (b+1)^{\total(\phi)}     b^{\white(\phi)},
\\
\label{e.intro5}
P(G;k) &=  \# \text{Admissible $k$-valuations of } G_m,
\\
\label{e.intro2}
P(G;k) &= \sum  (-1)^{\cross(\phi)},
\\
\label{e.intro3}
P(G;-k) &= (-1)^{f(G)}  \sum  2^{\total(\phi)},
\\
\label{e.intro4}
  Q(G; ( \alpha, \beta , \gamma ), k) &= \sum   (\alpha+\beta+\gamma)^{\total(\phi)}  \alpha^{\white(\phi)}  \beta^{\black(\phi)}  \gamma^{\cross(\phi)}.
\end{align}
Here the sums in \eqref{e.qk4}--\eqref{e.qk1} are over all $k$-valuations $\phi$ of $G_m$ that contain no crossing configurations. An \emph{admissible} $k$-valuation is one that contains no black configurations, and the sums in \eqref{e.intro2} and \eqref{e.intro3} are over all admissible $k$-valuations of $G_m$. The sum in \eqref{e.intro4} is over all $k$-valuations $\phi$ of $G_m$. Furthermore,  in \eqref{e.qk4} and \eqref{e.intro5} $G$ must be plane, and in \eqref{e.intro3} the Petrie dual (defined in Section~\ref{sec2}) of the geometric dual of $G$ must be orientable for the identity to hold.
\end{theorem}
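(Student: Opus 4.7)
The plan is to treat \eqref{e.intro4} as the master identity from which the Tutte and Bollob\'as--Riordan identities fall out by substitution, and to establish the Penrose identities \eqref{e.intro5}--\eqref{e.intro3} via the graphical calculus and its extension to surfaces. Since all seven identities come from the sources already cited, the work is largely one of assembly and verification rather than fresh combinatorics.

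For \eqref{e.intro4}, I would use that the topological transition polynomial $Q(G_m;(\alpha,\beta,\gamma),k)$ is by definition a state sum over transition systems of the 4-regular graph $G_m$, where at each vertex one of three local pairings of the incident edge-ends is chosen, weighted $\alpha$, $\beta$, or $\gamma$ according to which pair of opposite corners in the canonical checkerboard colouring the pairing separates. The key correspondence is that a $k$-valuation $\phi$ induces a transition system by the rule of pairing edges that share a colour: at a white, black, or crossing vertex this pairing is forced and contributes $\alpha$, $\beta$, or $\gamma$ respectively, while at a total vertex all four edge-ends share a colour so any of the three pairings is consistent, contributing the combined weight $\alpha+\beta+\gamma$. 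Summing $\alpha^{\white(\phi)}\beta^{\black(\phi)}\gamma^{\cross(\phi)}(\alpha+\beta+\gamma)^{\total(\phi)}$ over all $k$-valuations therefore reproduces $Q$, which is the content of \cite{KP}. From here, \eqref{e.qk4}--\eqref{e.qk1} follow by substituting $(\alpha,\beta,\gamma)=(b,1,0)$ in \eqref{e.intro4}: the factor $0^{\cross(\phi)}$ restricts the sum to $k$-valuations with no crossings, and the remaining weight collapses to $(b+1)^{\total(\phi)}b^{\white(\phi)}$. The left-hand sides are then standard Jaeger-style evaluations of $Q$ in terms of $R$, and in the plane case $T$, as in \cite{MR1096990,MR3326433}; \eqref{e.intro1} is the further specialisation $b=1$.

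For the Penrose identities, \eqref{e.intro5} is essentially the original Penrose graphical calculus \cite{MR0281657,MR1428870} recast in medial-graph language: in the plane case, admissible $k$-valuations of $G_m$ are exactly the states counted by the calculus and no crossings arise, so the count is unsigned. Identity \eqref{e.intro2} is the surface extension from \cite{MR2994409}, in which crossings in higher genus contribute $(-1)^{\cross(\phi)}$ to the signed count. Finally \eqref{e.intro3} follows from \eqref{e.intro2} via the substitution $k\mapsto -k$ together with a duality that exchanges admissibility with the absence of crossings and accounts for the $(-1)^{f(G)}$ factor; the Petrie-dual orientability hypothesis is precisely what is needed to ensure this exchange produces no extra sign. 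The main obstacle I anticipate is bookkeeping: the polynomials are normalised slightly differently across \cite{MR1428870,MR2994409,MR1096990,MR3326433,KP}, and one must carefully check that the exponents $\white,\black,\cross,\total$ refer to the same combinatorial data in each source, and that all signs and prefactors of $k^{\kappa(G)}$ or $b^{r(G)}$ assemble correctly.
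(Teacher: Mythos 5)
The paper offers no proof of Theorem~\ref{t1}: it is presented as a summary of results from the cited literature, with the only structural remark being that \eqref{e.qk4}--\eqref{e.intro3} can all be derived from \eqref{e.intro4} as in \cite{MR3326433}. Your plan of treating \eqref{e.intro4} as the master identity, obtaining \eqref{e.qk4}--\eqref{e.qk1} via the substitution $(\alpha,\beta,\gamma)=(b,1,0)$ (and $b=1$ for \eqref{e.intro1}) together with the standard Jaeger-type expressions of $T$ and $R$ as evaluations of $Q$, and handling the Penrose identities separately, is therefore exactly the route the paper has in mind, and your double-counting argument for \eqref{e.intro4} (forced transitions at white/black/crossing vertices, all three transitions compatible at a total vertex) is the correct one.

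Two points need correction. First, an attribution error: \eqref{e.intro4} is due to Jaeger \cite{MR1096990} and Aigner \cite{MR1428870}, not to Korn and Pak; \cite{KP} is the source of \eqref{e.intro1}. Second, and more substantively, your justification of \eqref{e.intro5} asserts that for plane $G$ ``no crossings arise'' in admissible $k$-valuations, so the count is unsigned. This is false: admissible $k$-valuations exclude black configurations but crossing configurations certainly occur for plane medial graphs. The reason \eqref{e.intro5} and \eqref{e.intro2} are consistent for plane $G$ is a parity fact --- every admissible $k$-valuation of a plane medial graph has an \emph{even} number of crossing vertices, so $(-1)^{\cross(\phi)}=1$ throughout and the signed sum \eqref{e.intro2} collapses to the cardinality in \eqref{e.intro5}. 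If you wrote the argument as stated, the step ``no crossings, hence unsigned'' would fail; you need the evenness argument (or a direct appeal to \cite{MR1428870,MR2994409}) instead. The remaining sketch of \eqref{e.intro3}, via $k\mapsto -k$ and the Petrie-dual orientability hypothesis, is consistent with how \cite{MR3326433} derives it, though as you note the normalisation bookkeeping there is the real work.
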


Equation~\eqref{e.intro1} is due to Korn and Pak \cite{KP}.
Equation~\eqref{e.intro4} Is due to   Jaeger \cite{MR1096990} and Aigner \cite{MR1428870}. 
The remaining interpretations are due to Ellis-Monaghan and Moffatt. Equation   \eqref{e.intro2} and a special case of \eqref{e.intro3} is  from   \cite{MR2994409}. The remaining identities are from \cite{MR3326433}. Equation~\eqref{e.qk1} generalises \eqref{e.qk4} and \eqref{e.intro1}, and \eqref{e.intro2} generalises \eqref{e.intro5}. Equations~\eqref{e.qk4}--\eqref{e.intro3} can all be obtained from \eqref{e.intro4}, as described in \cite{MR3326433}.

Equations~\eqref{e.qk4}--\eqref{e.intro4} address the problem of finding combinatorial interpretations of topological graph polynomials. 
In this paper we  invert the problem and take $k$-valuations as our starting point. 
\begin{definition}\label{d.2}
For an embedded graph $G$ and natural number $k$, let 
\[\Omega_k (G; w, x,y,z) :=\sum_{\phi  \text{ a $k$-valuation of }G_m}  w^{\total(\phi)}  x^{\white(\phi)}  y^{\black(\phi)}  z^{\cross(\phi)}   \]
where $\white(\phi)$, $\black(\phi)$,  $\cross(\phi)$ and $\total(\phi)$ denote the numbers of white vertices, black vertices,  crossing vertices and total vertices, respectively, in a $k$-valuation $\phi$.
 \end{definition}

Our approach to understanding  $\Omega_k $ is skein theoretic. We provide a recursive definition for  $\Omega_k $ (similar to the skein relations defining the transition polynomial, and akin to the deletion-contraction relations for the Tutte polynomial).  Doing this, however, requires us to consider $k$-valuations of a generalisation of embedded graphs that we name `edge-point ribbon graphs'. These are essentially ribbon graphs whose vertices can intersect in points. In the setting of edge-point ribbon graphs, we define, recursively, a polynomial invariant $\Omega(G) \in \mathbb{Z}[w,x,y,z,t]$, give a state-sum formulation of it, and prove that $\Omega_k (G)$ gives a combinatorial interpretation of it.  It follows from this that  $\Omega_k $  is a polynomial in $k$, and can be considered as a graph polynomial.
 We show $\Omega(G)$ specialises to the Tutte polynomial, the Bollob\'as--Riordan polynomial,  the (topological) Penrose polynomial, and the (topological) transition polynomial. Finally, we give some combinatorial evaluations of $\Omega$ and $\Omega_k$ in terms of graph colourings.

\begin{remark}\label{remfl}
Here  we have allowed medial graphs to have free-loops, which arise from isolated vertices. Medial graphs can also be constructed without reference to free-loops: just construct $G_m$ by placing a vertex on each edge of $G$ then adding edges as curves on the surface that follow the face boundaries between vertices. This construction ignores any isolated vertices, so, for example, the medial graph of any edgeless graph would be the empty graph. It is straightforward to adapt the results in this paper for this construction of medial graphs.
If we use $\widetilde{G}_m$ to denote the medial graph of $G$ constructed in this way without free-loops, and  $\widetilde{\Omega}_k(G)$ to denote the generating function of Definition~\ref{d.2}, but summing over  $k$-valuations of $\widetilde{G}_m$. Then 
$  k^{\ell} \, \widetilde{\Omega}_k(G) =   \Omega_k(G)$ when $G$ has $\ell$ isolated vertices.
\end{remark}

\section{Ribbon graphs}\label{sec2}
As is often the case when working with topological graph polynomials, it is convenient to describe embedded graphs as ribbon graphs.  
We recall some basic definitions about ribbon graphs here, and refer the reader to, for example, \cite{EMMbook} for additional background on them.

A {\em ribbon graph} $G=\left(V(G),E(G)\right)$ is a surface with boundary, represented as the union of two sets of discs --- a set $V(G)$ of {\em vertices} and a set $E(G)$ of {\em edges} --- such that: (1) the vertices and edges intersect in disjoint line segments; (2) each such line segment lies on the boundary of precisely one vertex and precisely one edge; and (3) every edge contains exactly two such line segments. 

Two ribbon graphs $G$ and $G'$ are \emph{equivalent} is there is a homeomorphism from $G$ to $G'$ (orientation preserving when $G$ is orientable) mapping $V(G)$ to $V(G')$, $E(G)$ to $E(G')$. In particular the   cyclic order of half-edges at each vertex is preserved.

Let $G$ be a ribbon graph and $e\in E(G)$. Then $G\ba e$ denotes the ribbon graph obtained from $G$ by \emph{deleting} the edge $e$.
If $u$ and $v$ are the (not necessarily distinct) vertices incident with $e$, then $G/e$ denotes the ribbon graph obtained as follows: consider the boundary component(s) of $e\cup\{u,v\}$ as curves on $G$. For each resulting curve, attach a disc (which will form a vertex of $G/e$) by identifying its boundary component with the curve. Delete $e$, $u$ and $v$ from the resulting complex, to get the ribbon graph $G/e$. We say $G/e$ is obtained from $G$ by {\em contracting} $e$. See Figure~\ref{tablecontractrg} for the local effect of contracting an edge of a ribbon graph.

\begin{table}[t]
\centering
\begin{tabular}{|c||c|c|c|}\hline
& non-loop & non-orientable loop & orientable loop \\ \hline
\raisebox{6mm}{$G$} 
&\includegraphics[scale=.25]{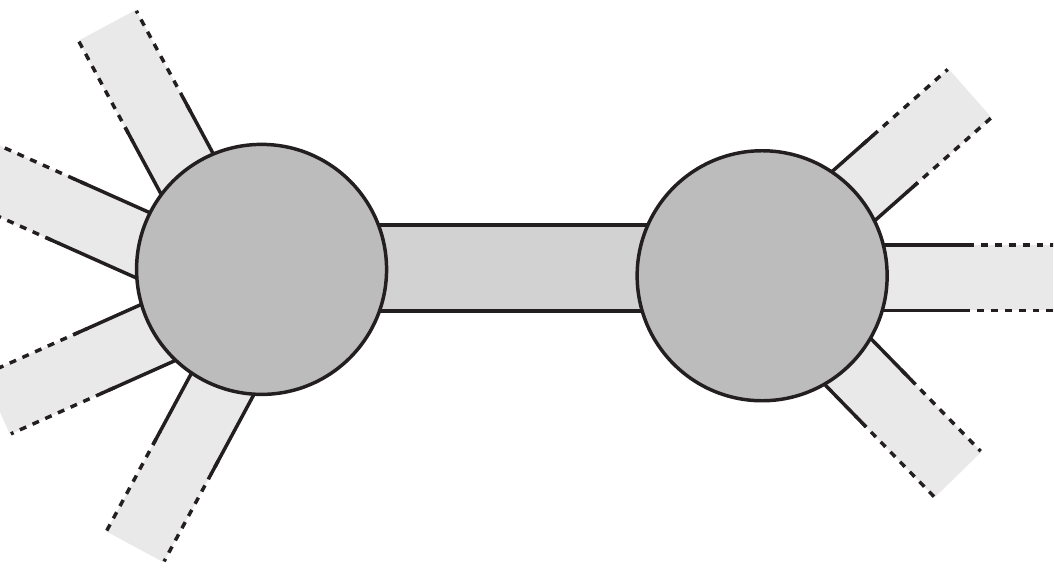} &\includegraphics[scale=.25]{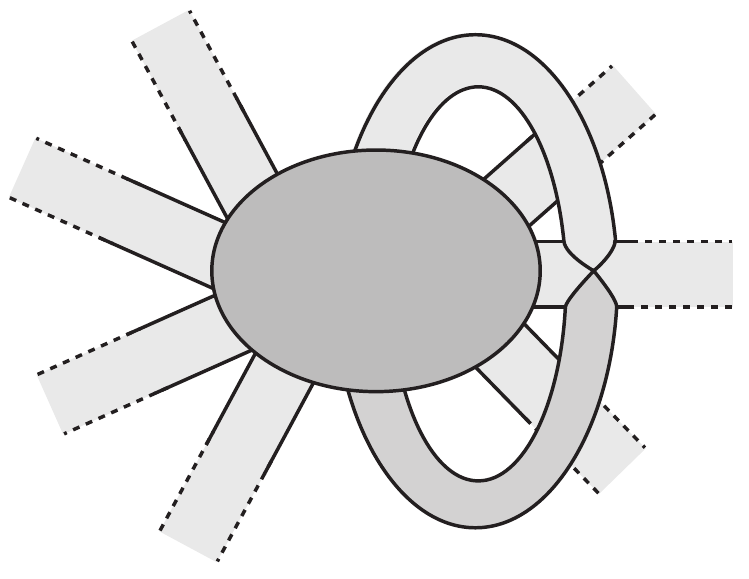} &\includegraphics[scale=.25]{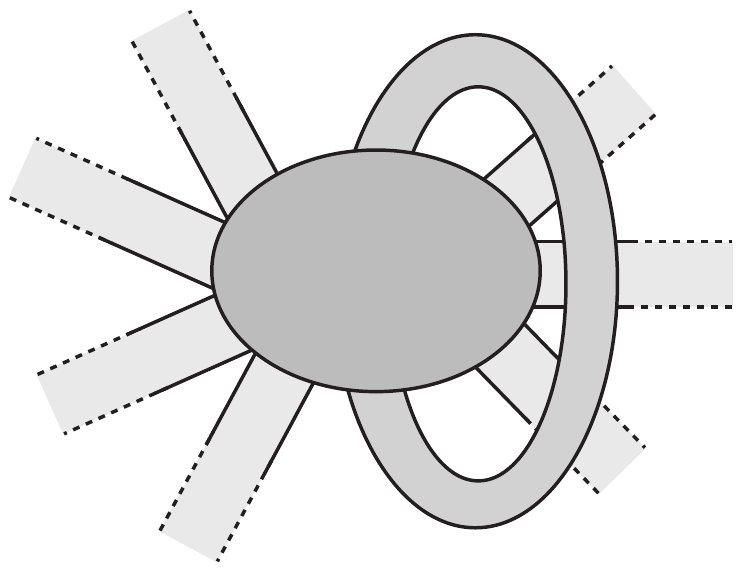}
\\ \hline
\raisebox{6mm}{$G/e$} 
&\includegraphics[scale=.25]{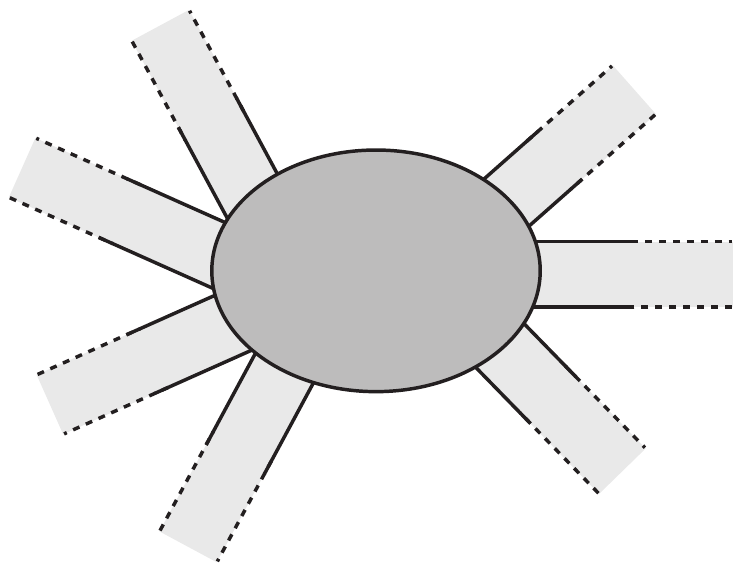} &\includegraphics[scale=.25]{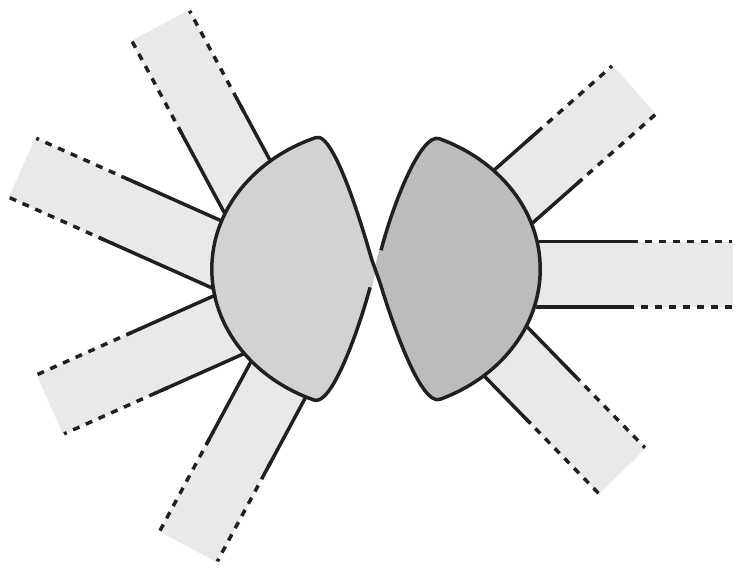}&\includegraphics[scale=.25]{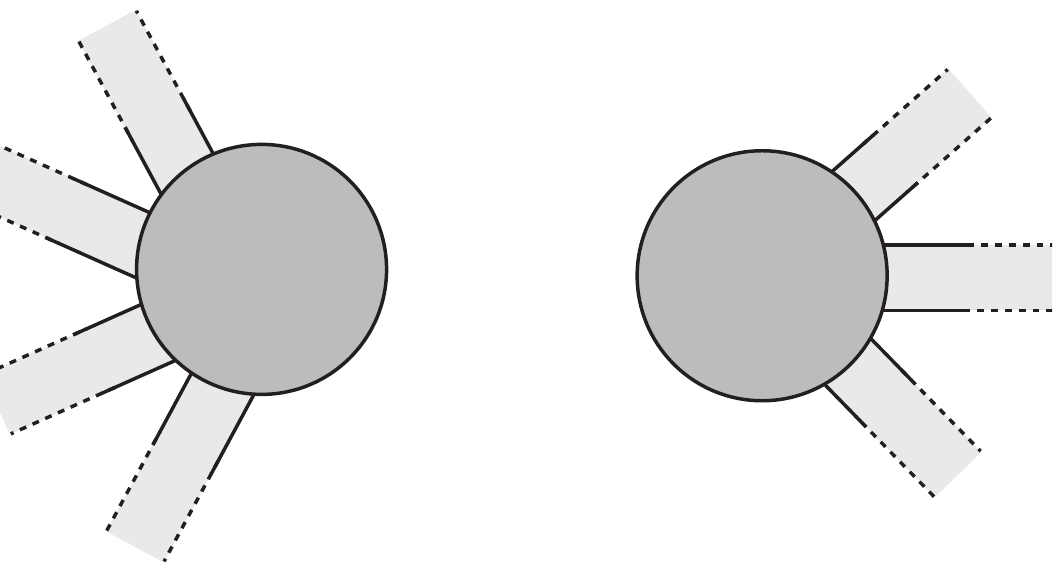} 
\\ \hline
\raisebox{6mm}{$G\tcon e$} 
 & \includegraphics[scale=.25]{ch4_37a}&\includegraphics[scale=.25]{ch4_38a}&\includegraphics[scale=.25]{ch4_37a}
\\ \hline
\end{tabular}
\caption{Contracting and Penrose-contracting an edge of a ribbon graph}
\label{tablecontractrg}
\end{table}

The \emph{partial petrial} of $G$ formed with respect to $e$, introduced in \cite{MR2869185}, is the ribbon graph $G^{\tau(e)}$ obtained from $G$ by  detaching an end of $e$ from its incident vertex $v$ creating arcs $[a,b]$ on $v$, and $[a',b']$ on $e$  (so that $G$ is recovered by identifying $[a,b]$ with $[a',b']$), then reattaching the end  by identifying the arcs antipodally (so that $[a,b]$ is identified with $[b',a']$). The result of this process is indicated in Figure~\ref{fig.pp}.  The \emph{Petrie dual} is the ribbon graph obtained by forming the partial petrial with respect to every edge (in any order).
\begin{figure}[t]
\centering
\begin{tabular}{ccc}
\includegraphics[height=12mm]{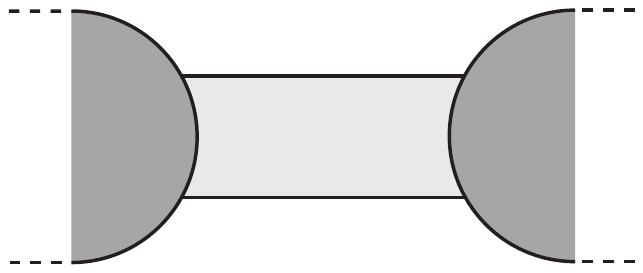}  &
\raisebox{4mm}{\includegraphics[width=15mm]{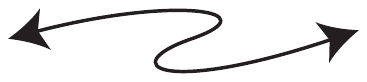}} &
\includegraphics[height=12mm]{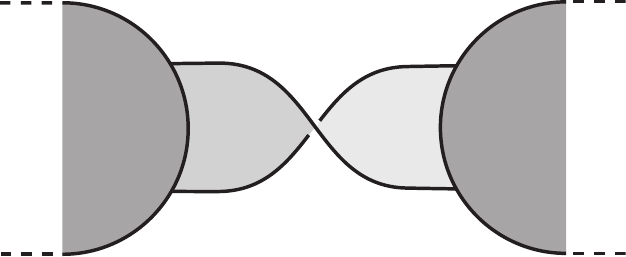}  \\
$G$ & $\longrightarrow$ & $G^{\tau(e)}$  \\ 
$G^{\tau(e)}$ &$\longleftarrow$ & $G$ 
\end{tabular}
\caption{Forming a partial Petrial at an edge of a ribbon graph}
\label{fig.pp}
\end{figure}

\begin{definition}\label{lk}
For a ribbon graph $G$ and edge $e$, we let  $G\tcon e$ denote the ribbon graph $G^{\tau(e)}/e$.  We call the operation this defines \emph{Penrose-contraction}. See Figure~\ref{tablecontractrg}.
\end{definition}

Deletion, contraction and Penrose-contraction are standard ribbon graph operations. In order to understand $\Omega_k (G)$ we introduce a new operation on a ribbon graph. For closure under this operation we need to augment the class of  graphic objects we consider. 

\begin{definition}\label{dty}
An \emph{edge-point ribbon graph} is an object obtained from a ribbon graph $G=(V,E)$ by contracting  each edge in some (possibly empty) subset $B$ of $E$ to a point. We call the points created by such a process \emph{singular points}. The image of edges under the contraction that are not singular points are called \emph{edges}. Its \emph{pinched-vertices} are components of the images of vertices (including the singular points) under the contraction.
\end{definition}
Figure~\ref{fig.krg} shows a edge-point ribbon graph. 
Note that the class of ribbon graphs is properly contained in the class of edge-point ribbon graphs. This is since a ribbon graph can be regarded as an edge-point ribbon graph that has no singular points. 

\begin{figure}[ht]
\begin{center}
\includegraphics[scale=.35]{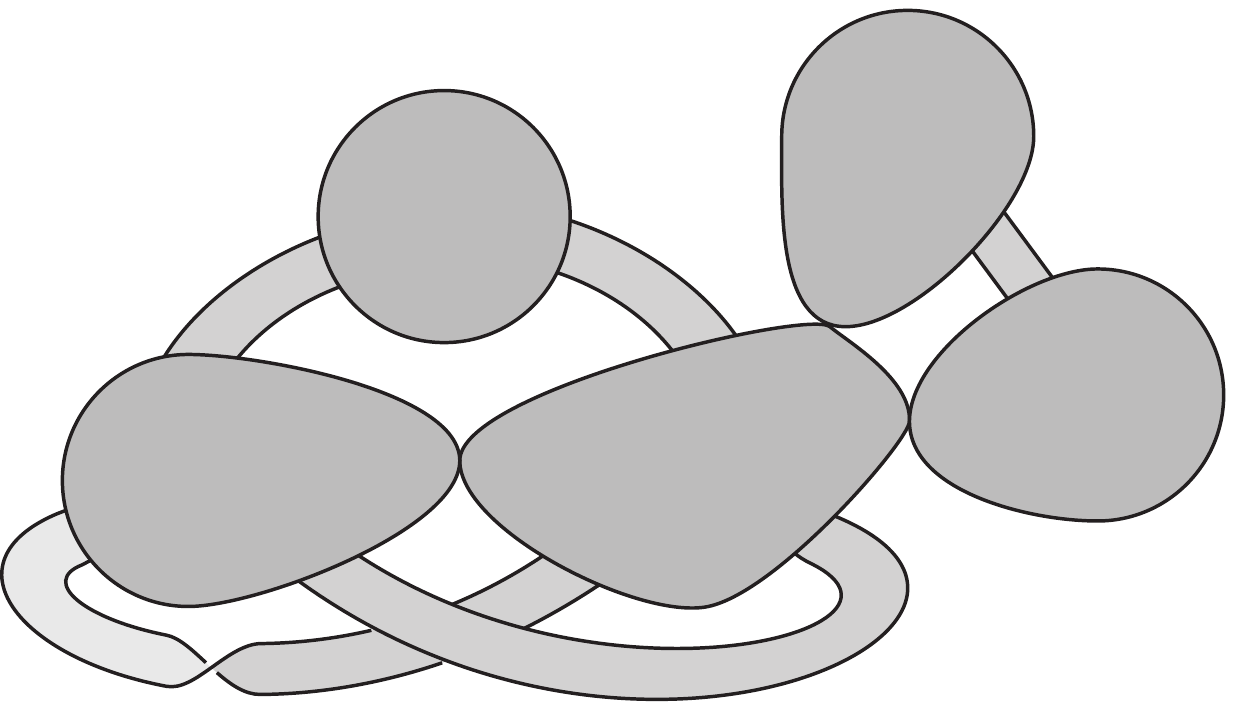} 
\hspace{10mm}
\includegraphics[scale=.35]{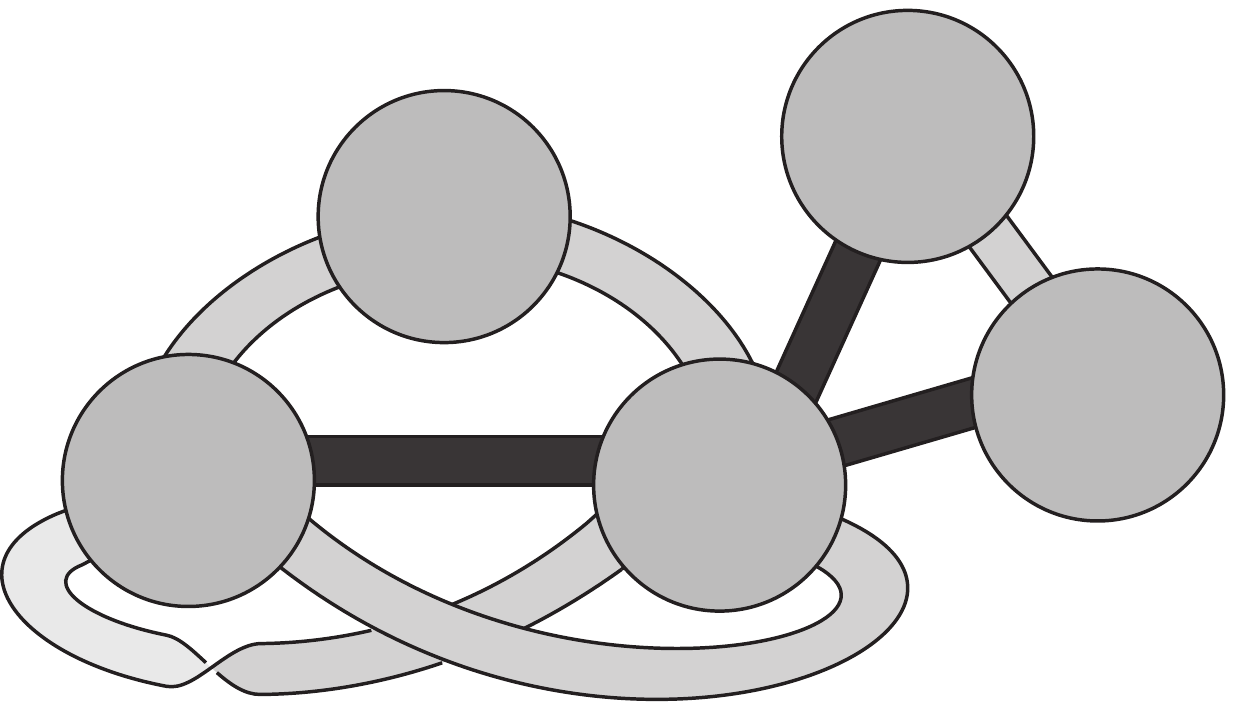}
\end{center}
\caption{An edge-point ribbon graph (left) and its description using edge colours (right) }
\label{fig.krg}
\end{figure}

A edgeless component of an edge-point ribbon graph is called a \emph{isolated vertex}.

Edge-point ribbon graphs can also be viewed as edge 2-coloured ribbon graphs where edges of one colour, by convention here dark grey, represent the edges that are contracted to a point in the formation of an edge-point ribbon graph, and  edges of the other colour (here light grey) correspond to the edges of the edge-point ribbon graph. See Figure~\ref{fig.krg}.

Two edge-point ribbon graphs are \emph{equivalent} if there is a homeomorphism (which is orientation preserving when the edge-point ribbon graphs are orientable) from one to the other that sends edges to edges, and pinched-vertices to pinched-vertices.
In the edge 2-colour presentation, they are equivalent if one is equivalent to a partial Petrial of the other, where the partial Petrial is formed with respect to a (possibly empty) subset of the (dark grey) edges that are contracted to a point.

The operations of deletion, contraction, partial Petriality, and Penrose-contraction for edge-point ribbon graphs are inherited from the ribbon graph operations. 
The extensions of deletion and partial Petriality require no comment. Contraction is defined by using the edge 2-coloured model and contracting in that using standard ribbon graph contraction. The extension of Penrose-contraction then follows.
Note that these operations may not be applied to the dark grey edges when using the edge 2-coloured model.

\begin{definition}\label{ftyh}
Let $G=(V,E)$ be an edge-point ribbon graph, and $e\in E$. Then $G\ident e$ denotes the edge-point ribbon graph obtained by  contracting $e$ to a point. See Figure~\ref{fig.iden}. We call the operation  $G\ident e$ defines, \emph{contraction to a point}. 
\end{definition}

\begin{figure}[ht]
\centering
\begin{tabular}{ccc}
\includegraphics[scale=.45]{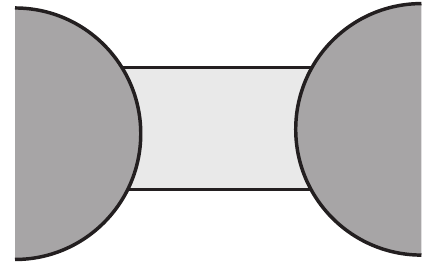} &  ~~~~~~\includegraphics[scale=.45]{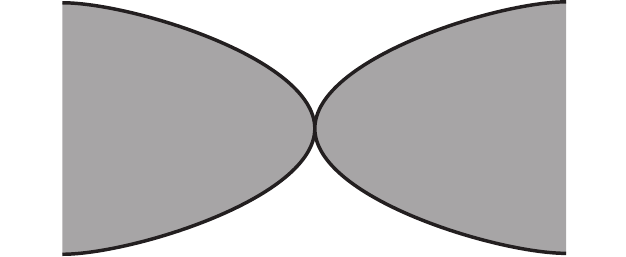} ~~~~~~&   \includegraphics[scale=.45]{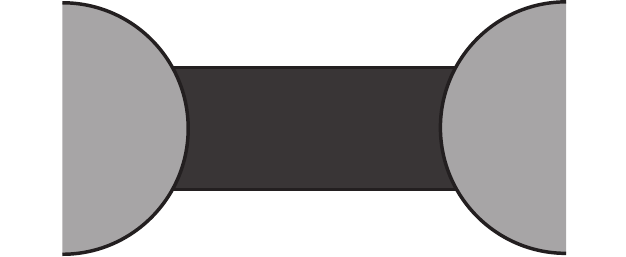}  \\
An edge $e$ of $G$ &    $G\ident e$ &  $G\ident e$ using edge colours
\end{tabular}
\caption{Forming $G\ident e$ at an edge  of an edge-point ribbon graph}
\label{fig.iden}
\end{figure}

Observe that the change in an edge-point ribbon graph made by the operations of deletion, contraction, Penrose-contraction, and contraction to a point  is local to the edge it is applied to. It follows that if the four operations commute when they are applied to different edges.   

For a ribbon graph $G$ and for $A\subseteq E(G)$ we let $G\ba A$ denote the result of deleting each edge in $A$. 
The notation $G/A$, $G\tcon A$ and $G\ident A$ is defined similarly.

We let  $\kappa(G)$ denote the number of connected components of an edge-point ribbon graph, and $\partial(G)$ denote its number of boundary components. 
Note that the boundary components of an edge-point ribbon graph need not be homeomorphic to a circle (unless it is also a ribbon graph). 
For example, the ribbon graph that is a plane theta-graph (i.e., the ribbon graph drawn on the plane with two vertices and three parallel edges between them) has three boundary components. Contracting any edge to a point results in an edge-point ribbon graph with two boundary components, one of which is homeomorphic to a circle, the other to a wedge of two circles (i.e., a figure-of-eight). 

If $G$ is an edge-point ribbon graph then it is important to remember that $\partial(G)$, in general, is \emph{not} equal to the to the number of boundary components of an edge 2-coloured ribbon graph that represents it.

\begin{remark}
Let us say a few words about what motivated  contraction to a point, and edge-point ribbon graphs.
Recent advances in understanding  the Bollob\'as--Riordan polynomial~\cite{MR1851080,MR1906909} and the Krushkal polynomial~\cite{Kr}, which are extensions of the Tutte polynomial to the setting of graphs in surfaces, have  been achieved by considering  
more exotic notions of deletion and contraction for graphs in surfaces, and extending the domains of the polynomials by requiring that they are closed under them.
 (See~\cite{EMMlv,HM,MS}.) In particular, in these extended domains the graph polynomials have `full' recursive deletion-contraction definitions that terminate in edgeless graphs, whereas they do not in their original restricted domains.

Contraction to a point, and edge-point ribbon graphs fit into this narrative. The Bollob\'as--Riordan and  Krushkal polynomials arise from considering a contraction operation on graphs in surfaces that contracts an edge to a point (as explained in~\cite{HM}). Analogously, contraction to a point, and edge-point ribbon graphs can be regarded as the structures that arise by contracting a 1-band in a band-decomposition (equivalently, an edge in a cellularly embedded ribbon graph)  to a point. This perspective also indicates why, in Definition~\ref{ght} below, it is natural to insist that  in a $k$-valuation at a singular vertex all edges incident to a singular vertex are of the same colour since everything is identified at the singular point. 
\end{remark}

\section{A graph polynomial $\Omega$}

We now use the above four operations on edge-point ribbon graphs to define a polynomial invariant of edge-point ribbon graphs.  
\begin{definition}\label{d.3}
Let $\Omega(G)\in \mathbb{Z}[w,x,y,z,t]$ be a  polynomial of edge-point ribbon graphs  recursively defined by
\[{\Omega}(G)= w\,{\Omega}(G\ident e)  + x\,{\Omega}(G/e) + y\,{\Omega}(G\ba e) + z\,{\Omega}(G\tcon e),  \]
and when $G$ is edgeless,
\[ {\Omega}(G) = t^{\kappa(G)} , \]
where  $\kappa(G)$ denotes the number of connected components of an edge-point ribbon graph.  
\end{definition}

\begin{example}
Using the edge 2-colour notation for edge-point ribbon graphs, we have the following.
\begin{align*}
\Omega\left(  \raisebox{-4mm}{\includegraphics[height=10mm]{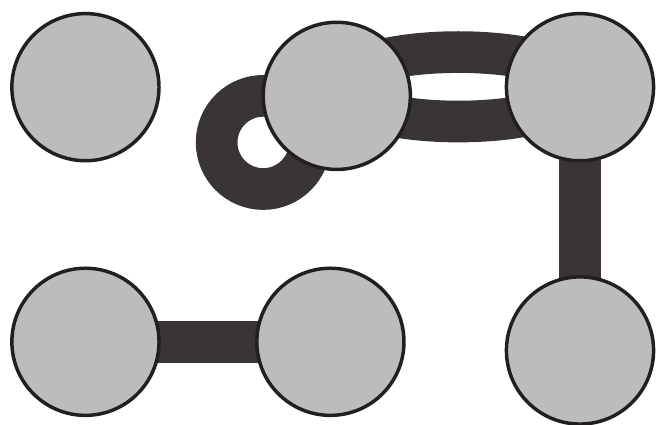}}   \right) &= t^3  
\\ \Omega\left(  \raisebox{-4mm}{\includegraphics[height=10mm]{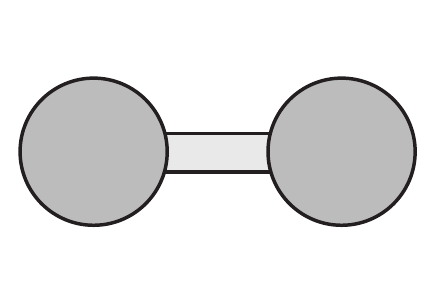}}   \right) &=   (w+x+z)t+yt^2
\\
\Omega\left(  \raisebox{-4mm}{\includegraphics[height=10mm]{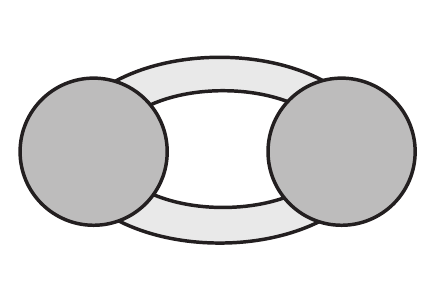}}   \right)  &= w(w+x+y+z)t +  x(w+y+z)t  +y(w+x+z)t \\ & \qquad\qquad\qquad\qquad\qquad\qquad+ z(w+x+y)t +  (x^2+y^2+z^2)t^2 
\\
\Omega\left(  \raisebox{-4mm}{\includegraphics[height=10mm]{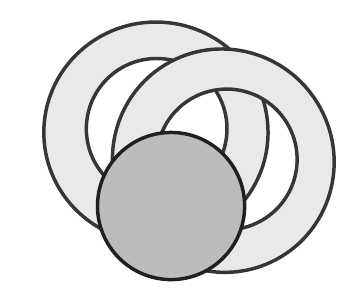}}   \right) &=  w(w+x+y+z)t +  x(w+x+z)t +y(w+y+z)t \\ & \qquad\qquad\qquad\qquad\qquad\qquad+ z(w+x+y)t +  (2xy+z^2)t^2 
\end{align*}
\end{example}

Definition~\ref{d.3} does not a priori result in a well-defined polynomial invariant since its value could, in principle, depend upon the order of edges to which the recursion relation is applied. The following theorem shows that the value of $\Omega(G)$ is in fact independent of such a choice and hence Definition~\ref{d.3} does give a well-defined polynomial invariant.  

\begin{theorem}\label{qws}
Let $G$ be an edge-point ribbon graph with edge set $E$. Then 
\begin{equation}\label{dh}
  \Omega(G) =\sum_{\substack{ (A,B,C,D)    \text{ an} \\ \text{ordered partition of }  E}}    w^{|A|} x^{|B|} y^{|C|} z^{|D|}  t^{ \partial (  G[A,B,C,D]   )  }   ,   
  \end{equation}
where 
\[ G[A,B,C,D]  :=  ((((G\ident A) /B) \ba C) \tcon D),   \]
and $\partial (  G[A,B,C,D]   )$ is its number of boundary components.
\end{theorem}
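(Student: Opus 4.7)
The plan is a straightforward induction on $|E|$, leveraging the observation noted just after Definition~\ref{ftyh} that the four operations $\ident,\,/,\,\ba,\,\tcon$ commute when applied to distinct edges. The key conceptual point is that the right-hand side of \eqref{dh} is manifestly a function of the edge-point ribbon graph $G$ alone, with no dependence on any ordering of edges. Thus, once I show that the recursive $\Omega(G)$ equals this state-sum, well-definedness of $\Omega$ (i.e.\ independence of the edge order in which the recursion is applied) follows as an immediate corollary, bypassing any direct combinatorial check of confluence.

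For the base case $E=\emptyset$, the only ordered partition is $(\emptyset,\emptyset,\emptyset,\emptyset)$, so the state-sum evaluates to $t^{\partial(G)}$, whereas the recursion's base case gives $t^{\kappa(G)}$. The reconciliation is the claim that $\partial(G)=\kappa(G)$ for every edgeless edge-point ribbon graph. Each connected component of such a $G$ is a pinched-vertex, namely a connected wedge of discs identified at finitely many singular points. Its boundary is obtained from the boundary circles of the constituent discs by identifying points in pairs at each pinch point; connectedness of the pinched-vertex forces this 1-complex to be connected, so each connected component of $G$ contributes exactly one boundary component.

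For the inductive step, fix any edge $e \in E$ and apply the recursion. Each of $G\ident e$, $G/e$, $G\ba e$, $G\tcon e$ has one fewer edge, so the inductive hypothesis expresses each of the four values as a state-sum over ordered partitions of $E\setminus\{e\}$. Commutativity on distinct edges yields
\[
(G\ident e)[A,B,C,D] = G[A\cup\{e\},B,C,D],
\]
together with the three analogous identities in which $e$ is inserted into $B$, $C$, or $D$ respectively. Multiplying by $w$, $x$, $y$, $z$ and adding, the four inner sums recombine into a single sum over ordered partitions of the full edge set $E$, indexed by which of the blocks $A,B,C,D$ contains $e$. This is exactly the right-hand side of \eqref{dh}.

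The only step that is not mechanical is the base-case identity $\partial(G)=\kappa(G)$: it is mildly delicate because, as explicitly warned just before Remark~2.6, boundary components of edge-point ribbon graphs can be non-circular 1-complexes such as wedges of circles, so one cannot simply invoke a standard ribbon-graph fact. I expect this connectedness argument for the boundary of a pinched-vertex to be the main (modest) obstacle; once it is in hand, the rest of the proof is a routine rewriting using commutativity, and the well-definedness of the recursion falls out for free.
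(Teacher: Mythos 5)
Your proof is correct and follows essentially the same route as the paper's: induction on $|E|$, splitting the state sum according to which block of the ordered partition contains a fixed edge $e$, and using commutativity of the four operations on distinct edges to recombine the four sub-sums. The only difference is that you explicitly justify the base-case identity $\partial(G)=\kappa(G)$ for edgeless edge-point ribbon graphs (via connectedness of the boundary of a pinched-vertex), which the paper asserts without comment; your justification is sound.
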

\begin{proof}
Recall that the operations of deletion, contraction, Penrose-contraction, and contraction to a point commute when they are applied to different edges. 
We use induction on the number of edges to prove that the state sum in \eqref{dh} satisfies the identities in Definition~\ref{d.3}. For clarity, in this proof we let $\Theta (G) $ denote the sum in the right-hand side of~\eqref{dh}.  

If $G$ has no edges then $\Theta (G) =  t^{\partial(G)} = t^{\kappa(G)} $, and so  $\Theta (G) =\Omega(G)$. Otherwise, for any edge $e$, we can write 
\begin{multline}\label{yh}
  \Theta (G)   =   \sum_{\substack{ (A,B,C,D)    \text{ ordered} \\ \text{partition of }  E(G) \\ \text{where } e\in A }}  \theta(G; A,B,C,D ) 
+\sum_{\substack{ (A,B,C,D)    \text{ ordered} \\ \text{partition of }  E(G) \\ \text{where } e\in B }}  \theta(G;  A,B,C,D ) 
\\
+\sum_{\substack{ (A,B,C,D)    \text{ ordered} \\ \text{partition of }  E(G) \\ \text{where } e\in C }}  \theta(G;  A,B,C,D )
 +\sum_{\substack{ (A,B,C,D)    \text{ ordered} \\ \text{partition of }  E(G) \\ \text{where } e\in D }}  \theta( G; A,B,C,D ) ,
  \end{multline}
where $\theta( G; A,B,C,D ):= w^{|A|} x^{|B|} y^{|C|} z^{|D|}  t^{ \partial (  G[A,B,C,D]   )  }$.

Focussing on the first sum in the right-hand side of \eqref{yh}, we see that, since $e\in A$,  we have $G[A,B,C,D] = (G\ident e)  [A\ba \{e\} ,B,C,D]$, and so  
\[ 
   \sum_{\substack{ (A,B,C,D)    \text{ ordered} \\ \text{partition of }  E(G) \\ \text{where } e\in A }}  \theta(G; A,B,C,D )  
=
  w\,  \sum_{\substack{ (A,B,C,D)    \text{ ordered} \\ \text{partition of }  E( G\ident e )  }}  \theta(G\ident e; A,B,C,D )  
= w\, \Omega(G\ident e),
  \]
where the last equality is by the inductive hypothesis.

Similar arguments, and making use of the fact that the four operations commute when applied to different edges, give that the second, third, and fourth sums in   \eqref{yh}, equal $x{\Omega}(G/e)$,  $y{\Omega}(G\ba e)$, and  $z{\Omega}(G\tcon e)$, respectively. It follows that $\Theta (G) =\Omega(G)$.
\end{proof}

Theorem~\ref{qws} immediately gives the following result.
\begin{corollary}\label{dsi}
$\Omega$ is well-defined in the sense that it is independent of the order of edges to  which the  recursion relations of Definition~\ref{d.3} are applied.
\end{corollary}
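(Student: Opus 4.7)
The plan is a short induction on $|E(G)|$, using Theorem~\ref{qws} as the essential input. The key observation is that the state sum on the right-hand side of \eqref{dh} is manifestly a function of $G$ alone: it ranges over ordered 4-partitions $(A,B,C,D)$ of $E(G)$ and involves only $|A|,|B|,|C|,|D|$ and the topological quantity $\partial(G[A,B,C,D])$, with no edge-ordering of the recursion entering anywhere. Denote this state sum by $\Theta(G)$; it is unambiguously defined.

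The induction is organised so that both statements ``$\Omega(G)$ is well-defined'' and ``$\Omega(G)=\Theta(G)$'' are established simultaneously. In the base case, $|E(G)|=0$, so Definition~\ref{d.3} immediately gives $\Omega(G)=t^{\kappa(G)}$, with no choices to make; and this agrees with $\Theta(G)$ since the only ordered 4-partition of the empty set contributes $t^{\partial(G)}=t^{\kappa(G)}$.

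For the inductive step, assume the claim holds for all edge-point ribbon graphs with strictly fewer edges than $G$. Pick any edge $e\in E(G)$. By the inductive hypothesis, each of $\Omega(G\ident e)$, $\Omega(G/e)$, $\Omega(G\ba e)$, $\Omega(G\tcon e)$ is a well-defined polynomial equal to the corresponding $\Theta$. The computation carried out in the proof of Theorem~\ref{qws} then shows that
\[ w\,\Omega(G\ident e)+x\,\Omega(G/e)+y\,\Omega(G\ba e)+z\,\Omega(G\tcon e)=\Theta(G), \]
and the right-hand side does not depend on $e$. Hence applying the recursion at any edge of $G$ yields the same value, namely $\Theta(G)$, so $\Omega(G)$ is well-defined at $G$.

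I do not expect any serious obstacle here: the entire content of the corollary is the observation that Theorem~\ref{qws} already does the work, provided one sets up the induction to carry well-definedness alongside the state-sum identity. The only thing to be careful about is not to invoke Theorem~\ref{qws} as an assertion about an object whose existence has not yet been established; phrasing the induction as a joint statement about $\Omega$ and $\Theta$ avoids this circularity cleanly.
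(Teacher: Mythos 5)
Your proof is correct and follows essentially the same route as the paper, which derives the corollary immediately from Theorem~\ref{qws} (whose proof is exactly the induction showing the edge-order-independent state sum satisfies the recursion). Your extra care in phrasing the induction as a joint statement about $\Omega$ and $\Theta$ to avoid circularity is a sound refinement of the paper's presentation, not a different argument.
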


We note that $\Omega$ subsumes several graph polynomials from the literature. In the following proposition, $T(G)$ denotes the Tutte polynomial, $R(G)$ the Bollob\'as--Riordan polynomial,   $P(G)$  the Penrose polynomial, and $Q(G)$ the topological transition polynomial. 
\begin{proposition}\label{p.2}
Let $G$ be a ribbon  graph. Then
\begin{enumerate}
\item $\Omega\big(G ;0,\sqrt{y/x}, 1, 0, \sqrt{xy}\big) = x^{\kappa (G)}  (\sqrt{y/x})^{|V(G)|}   T(G;  x+1, y+1)$, when $G$ is plane,
\item $\Omega\big(G ;0,\sqrt{y/x}, 1, 0, \sqrt{xy}\big) = x^{\kappa (G)}  (\sqrt{y/x})^{|V(G)|}   R(G;  x+1, y, 1/\sqrt{xy})$,
\item $ \Omega(G ;0,1, 0, -1, \lambda)    = P(G; \lambda)$,
\item  $ \Omega(G;0,\alpha,\beta,\gamma,t) = Q(G; \left(\alpha,\beta,\gamma\right),t)  $.
\end{enumerate}
\end{proposition}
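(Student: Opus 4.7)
The strategy is to apply the state-sum formula of Theorem~\ref{qws} in each case and match the result to a standard state sum (or recursion) for the polynomial on the right-hand side. Since every substitution sets $w=0$, only ordered partitions $(A,B,C,D)$ of $E(G)$ with $A=\emptyset$ contribute. Hence in each part the state sum reduces to
\[\Omega(G;0,x,y,z,t)=\sum_{(B,C,D)} x^{|B|} y^{|C|} z^{|D|}\, t^{\partial(((G/B)\ba C)\tcon D)},\]
summed over ordered tripartitions $(B,C,D)$ of $E(G)$.

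For part~4, I would use the classical dictionary between ribbon-graph operations on $G$ and transitions on the medial graph $G_m$: contraction, deletion, and Penrose-contraction at $e\in E(G)$ realise the three smoothings at the vertex of $G_m$ corresponding to $e$, and $\partial(((G/B)\ba C)\tcon D)$ equals the number of closed curves in the induced transition system on $G_m$. This identifies the state sum above (with $x=\alpha$, $y=\beta$, $z=\gamma$) with the defining state sum of the topological transition polynomial. Part~3 is then immediate from part~4 via the known specialisation $P(G;\la)=Q(G;(1,0,-1),\la)$ from \cite{MR2994409}.

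For part~2, I would prefer to verify the identity directly from the state sum. Setting $z=0$ forces $D=\emptyset$, and the sum collapses to $\sum_{B\subseteq E(G)}(\sqrt{y/x})^{|B|}(\sqrt{xy})^{bc(B)}$, where $bc(B)$ is the number of boundary components of the spanning ribbon subgraph $(V(G),B)$; here I use the standard identity $\partial((G/B)\ba(E(G)\ba B))=|V(G/B)|=bc(B)$. Rewriting the exponents using $r(B)=|V(G)|-k(B)$ and $n(B)=|B|-r(B)$ and comparing with the subset-expansion formula for $R(G;X,Y,Z)$ produces part~2, with the prefactor $x^{\kappa(G)}(\sqrt{y/x})^{|V(G)|}$ absorbing the rank/nullity normalisation. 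Part~1 then follows from part~2 together with the classical specialisation of the Bollob\'as--Riordan polynomial to the Tutte polynomial for plane ribbon graphs.

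The main obstacle is the transition-system dictionary in part~4; although well known, Penrose-contraction is the delicate case because it is a composite of a partial Petrial and a contraction, so one must check carefully that $G\tcon e$ really does correspond to the `crossing' smoothing at the medial vertex. A cleaner alternative would be to induct on $|E(G)|$, matching the recursive definition of $\Omega$ in Definition~\ref{d.3} term-by-term against the deletion--contraction--Penrose-contraction recursions for $R$ and $Q$ on ribbon graphs, the edgeless base cases $t^{\kappa(G)}$ agreeing on both sides after each specialisation.
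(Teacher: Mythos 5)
Your proposal is correct, and its skeleton matches the paper's: the paper disposes of the whole proposition in two sentences, observing that part~4 is immediate (setting $w=0$ in Definition~\ref{d.3} reproduces exactly the deletion--contraction--Penrose-contraction recursion and edgeless boundary condition defining the topological transition polynomial, equivalently its state sum over ordered tripartitions), and then deducing parts~1--3 by quoting the known expressions of $T$, $R$ and $P$ as specialisations of $Q$ from \cite{MR2994409, MR2821551}. Where you genuinely diverge is in parts~1--2: instead of routing $R$ and $T$ through $Q$, you verify part~2 directly from the state sum of Theorem~\ref{qws}, using $\partial((G/B)\ba(E\ba B))=|V(G/B)|=bc(B)$ and then matching exponents against the subset expansion of the Bollob\'as--Riordan polynomial. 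I checked the bookkeeping: with $r(A)=|V|-k(A)$, $n(A)=|A|-r(A)$ and $2g(A)=k(A)+n(A)-bc(A)$, the term of $x^{\kappa(G)}(\sqrt{y/x})^{|V|}R(G;x+1,y,1/\sqrt{xy})$ indexed by $A$ is exactly $(\sqrt{y/x})^{|A|}(\sqrt{xy})^{bc(A)}$, so your collapse of the state sum is right, and part~1 follows since for plane $G$ the $Z$-exponent vanishes and $R(G;X,Y,Z)=T(G;X,Y+1)$. Your route buys a self-contained computation that does not depend on the recipe theorem of \cite{MR2821551}, at the cost of having to fix a convention for $R$ and re-derive the medial/transition dictionary for part~4 (where, as you note, the only delicate point is that $G\tcon e=G^{\tau(e)}/e$ realises the crossing transition at $v_e$ --- this is exactly the content of Table~\ref{p.kval.f1} and of \cite{MR2869185}); the paper's route buys brevity by outsourcing everything to the transition polynomial literature. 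Either is acceptable.
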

\begin{proof}
The result for the transition polynomial, $Q(G)$  is immediate. The remaining results follow by expressing $T(G)$, $R(G)$, and $P(G)$ in terms of the transition polynomial, as in \cite{MR2994409, MR2821551}.
\end{proof}

\section{Evaluations and interpretations}
In Section~\ref{dyh} we described how to construct the medial graph  of a graph embedded in a surface. Medial graphs can also be constructed from ribbon graphs. Let $G$ be a ribbon graph. Construct an embedded graph $G_m$ by taking one point in each edge of $G$ as the vertices of $G_m$. To construct the edges,  from each vertex of $G_m$ draw four non-intersecting curves on the edge from that vertex to the four ``corners'' of  the edge (i.e., the four end-points of the arcs where the edge intersects its incident vertices). Connect these curves up by following the boundary of $G$ around the vertices. Finally, for each isolated vertex, add a free-loop that follows its boundary.

The \emph{medial graph of an edge-point ribbon graph} $G$ is the 4-regular embedded graph $G_m$ constructed by following the above procedure for ribbon graphs, and then adding a vertex  at each singular point.  See Figure~\ref{mdg} for an example.  The medial graph $G_m$ has two types of vertex: those corresponding to edges of $G$, we call these \emph{non-singular vertices}; and those corresponding to singular points of $G$, we call these \emph{singular vertices}.  By convention, here we draw the singular vertices of $G_m$ as hollow dots.

\begin{figure}[ht]
\[
  \includegraphics[scale=.4]{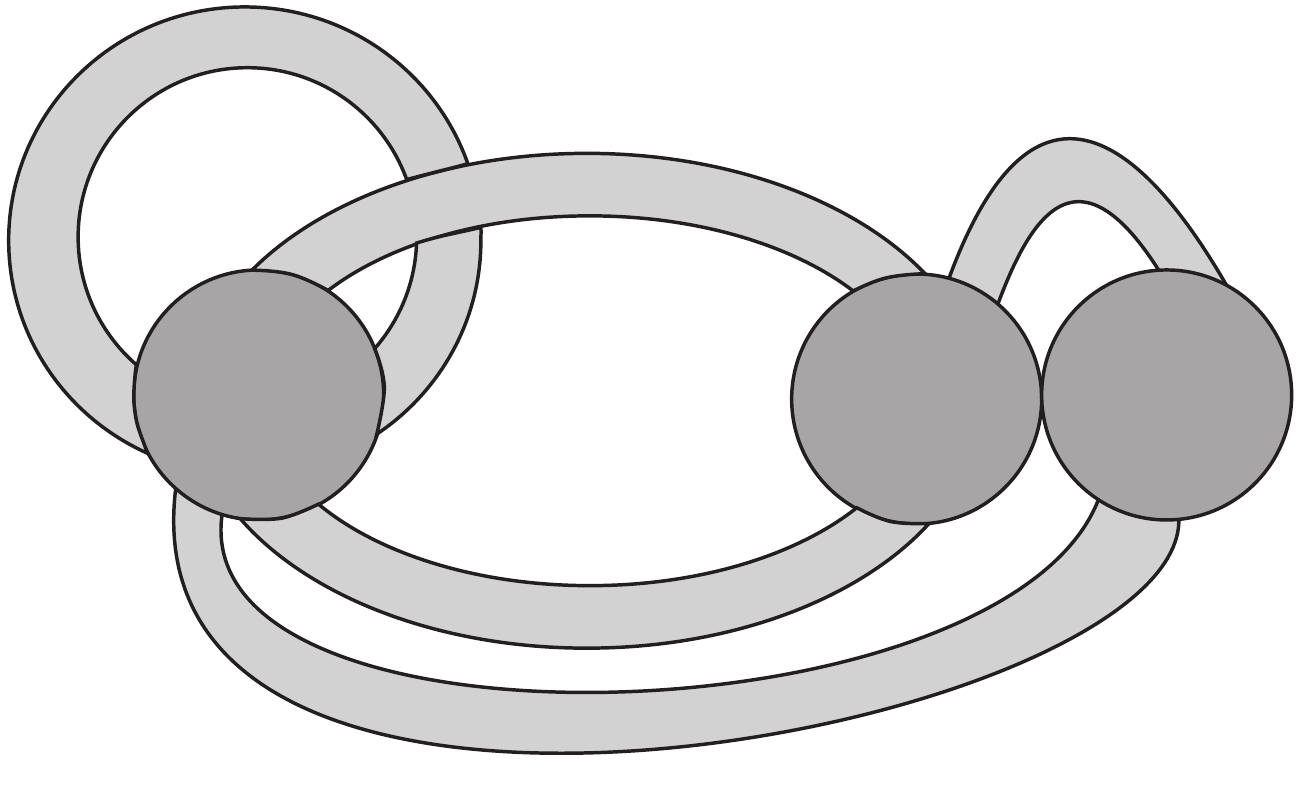}   \hspace{15mm}   \includegraphics[scale=.4]{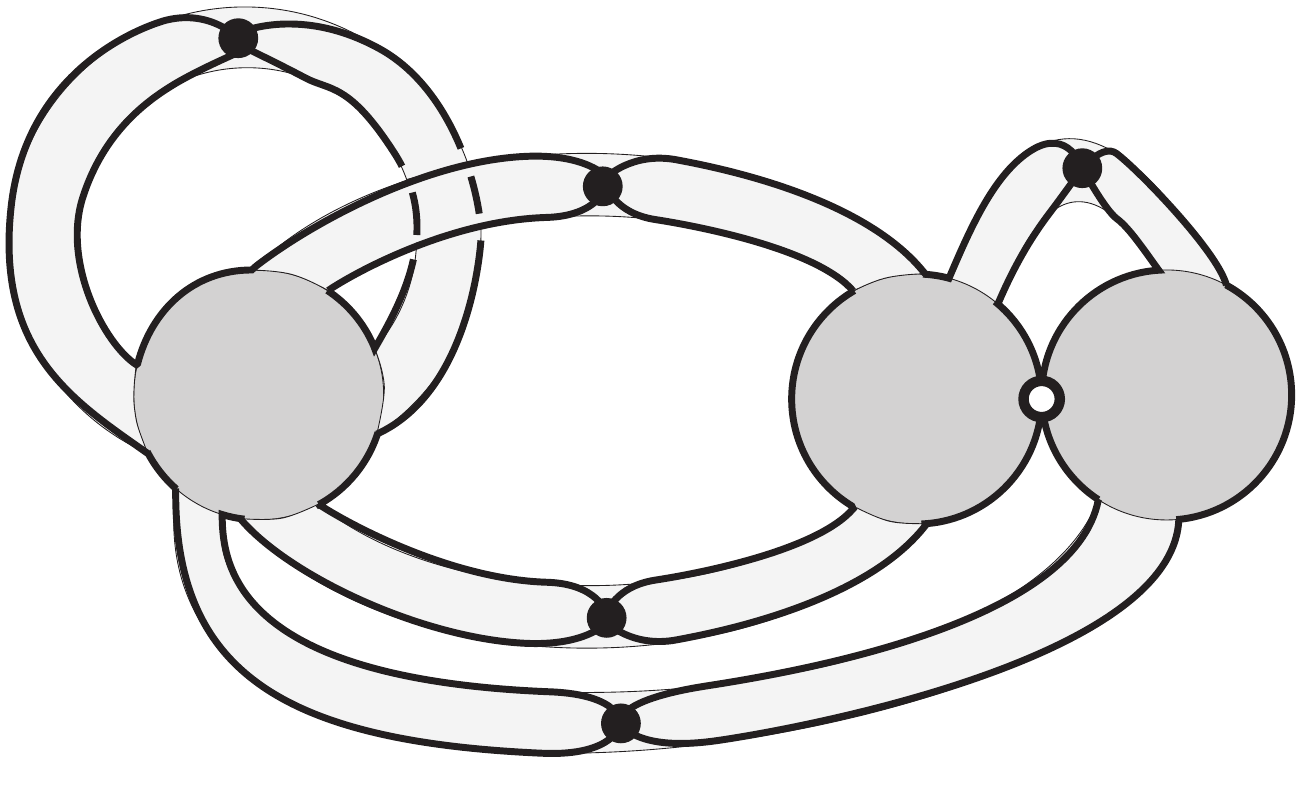} 
\]
\caption{An edge-point graph $G$ (left) and its medial graph $G_m$ embedded in $G$ (right)}
\label{mdg}
\end{figure}

When using the edge 2-colour model of an edge-point ribbon graph, $G_m$ can be formed by taking its medial graph of the ribbon graph  and declaring the vertices on the dark grey edges to be singular. 
 
Medial graphs of an edge-point ribbon graph admit a \emph{canonical checkerboard colouring}. This is obtained by colouring a region of the surface it lies in black if it contains a vertex of $G$ and white otherwise. The notion of a $k$-valuation extends to medial graphs of  edge-point ribbon graphs by insisting that all edges incident to a singular vertex are of the same colour (to reflect that everything is identified at a singular point), as follows.

\begin{definition}\label{ght}
Let $k$ be a natural number, and $G$ be an edge-point ribbon graph. 
A  \emph{$k$-valuation} of $G_m$  is an edge $k$-colouring of $G_m$ such that each vertex is incident to an even number (possibly zero) of edges of each colour, and all edges incident to a singular vertex are of the same colour.  

If $G_m$ is canonically checkerboard coloured then the four possible configurations of colours about a vertex, are \emph{white}, \emph{black}, \emph{crossing}, or \emph{total},  as described in Figure~\ref{f.kval}. Singular vertices are always total. 
 We let $\total(\phi)$ denote the number of non-singular total vertices in a $k$-valuation $\phi$.
\end{definition}

\begin{theorem}\label{t.1}
 Let $G$ be an edge-point ribbon graph, and let $k\in \mathbb{N}$. 
Then 
\begin{equation}\label{po}
\Omega(G;w,x,y,z,k)= \sum_{\phi \text{ a $k$-valuation of $G_m$}}  (w+x+y+z)^{\total(\phi)} x^{\white(\phi)} y^{\black(\phi)}z^{\cross(\phi)}.
\end{equation}
\end{theorem}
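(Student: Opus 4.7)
The plan is to prove the identity by induction on $|E(G)|$, using the recursive definition of $\Omega$ from Definition~\ref{d.3}.

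For the base case, $G$ has no edges and $\Omega(G;w,x,y,z,k)=k^{\kappa(G)}$ by Definition~\ref{d.3}. The medial graph $G_m$ consists of singular vertices (one per singular point of $G$) and free-loops (one per isolated vertex). A $k$-valuation of $G_m$ must assign the same colour to all four edges meeting at each singular vertex, and any of $k$ colours to each free-loop; a short check shows that the resulting number of $k$-valuations equals $k^{\kappa(G)}$, and each such valuation contributes $1$ to the right-hand side since its four configuration counts are all zero.

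For the inductive step, fix an edge $e$ of $G$ and let $v$ be the non-singular vertex of $G_m$ corresponding to $e$. Definition~\ref{d.3} gives
\[\Omega(G)=w\,\Omega(G\ident e)+x\,\Omega(G/e)+y\,\Omega(G\ba e)+z\,\Omega(G\tcon e).\]
The key observation is a medial-graph correspondence: each of the four operations modifies $G_m$ locally at $v$ in a prescribed way. Contracting $e$ to a point turns $v$ into a singular vertex; contracting $e$ smooths $v$ through its white faces; deleting $e$ smooths $v$ through its black faces; and Penrose-contracting $e$ replaces $v$ by a crossing of its two arcs. Consequently, $k$-valuations of $(G\ident e)_m$ are in bijection with $k$-valuations $\phi$ of $G_m$ in which $v$ has the total configuration (with $v$ now singular and so no longer contributing to $\total$); those of $(G/e)_m$ correspond to $\phi$ with $v$ white or total; those of $(G\ba e)_m$ to $\phi$ with $v$ black or total; and those of $(G\tcon e)_m$ to $\phi$ with $v$ crossing or total. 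Under these bijections, the configuration counts transfer as expected; for instance, going from $\phi'$ on $(G/e)_m$ to its lift $\phi$ on $G_m$, $\white(\phi)=\white(\phi')+1$ if $v$ is white in $\phi$, while $\total(\phi)=\total(\phi')+1$ if $v$ is total in $\phi$.

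Applying the inductive hypothesis to the four terms and substituting, the contribution from any $\phi$ with $v$ in a white, black, or crossing configuration appears in exactly one of the four terms, carrying the correct weight $x$, $y$, or $z$, and producing the expected summand on the right-hand side. The contribution from $\phi$ with $v$ total appears in all four terms, each with a factor of $(w+x+y+z)^{\total(\phi)-1}$ and coefficient $w$, $x$, $y$, or $z$; summing the four contributions restores the missing factor $w+x+y+z$ and recovers $(w+x+y+z)^{\total(\phi)}$. The main obstacle is verifying the medial-graph correspondence: one must show rigorously that each of the four operations affects $G_m$ at $v$ in the way claimed, and that this interacts correctly with the $k$-valuation parity condition and configuration classification. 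This extends the familiar deletion/contraction correspondence for the transition polynomial, with the novel ingredients being contraction to a point (which creates a singular vertex in $G_m$ forcing a total-only configuration) and the treatment of Penrose-contraction in the edge-point category.
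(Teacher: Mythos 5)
Your proposal is correct and follows essentially the same route as the paper: induction on $|E(G)|$, with the right-hand sum split according to the configuration at the medial vertex $v_e$, the four local bijections between $k$-valuations of $G_m$ (with $v_e$ total, white-or-total, black-or-total, crossing-or-total) and $k$-valuations of $(G\ident e)_m$, $(G/e)_m$, $(G\ba e)_m$, $(G\tcon e)_m$, and the observation that the four coefficients of a total vertex recombine to give the factor $w+x+y+z$. The paper establishes the local medial-graph correspondence you flag as the main obstacle by a case-by-case picture (its Table~\ref{p.kval.f1}), exactly as your outline anticipates.
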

\begin{proof}
We use induction on the number of edges of an edge-point ribbon graph $G$. 
The claim  is easily verified  when $G$ has no edges.

Now assume that $G$ has edges and that the claim holds for all edge-point ribbon graphs with fewer edges that $G$.
For  a $k$-valuation $\phi$ of $G_m$, and for $U\subseteq V(G_m)$, set
\[  \omega(G,\phi, U) :=  (w+x+y+z)^{\total(\phi, U)}  x^{\white(\phi,U)}  y^{\black(\phi,U)}  z^{\cross(\phi,U)},  \]
where $\total(\phi, U)$, $\white(\phi,U)$, $\black(\phi,U)$,   $\cross(\phi,U)$ denote the numbers of total, white, black, and crossing vertices contained in  $U$ in the $k$-valuation $\phi$  of $G_m$.

Fix an edge $e$  of $G$, and let $v_e$ be its corresponding vertex in $G_m$. 
By separating the sum according to what the $k$-valuation does at $v_e$, we can write the sum on the right-hand side of~\eqref{po} as
\begin{multline}\label{dtg}
w  \Big( \sum_{  \substack{\phi \text{ a $k$-val. of }    G_m \\  \phi\text{ tot. at  } v_e   }} \omega(G,\phi, V(G_m)\backslash \{v_e\}) \Big)
+
x  \Big( \sum_{  \substack{\phi \text{ a $k$-val. of }    G_m \\  \phi\text{ tot. or wh. at } v_e   }} \omega(G,\phi, V(G_m)\backslash \{v_e\}) \Big)
\\
+
y  \Big( \sum_{  \substack{\phi \text{ a $k$-val. of }    G_m \\  \phi\text{ tot. or bl. at } v_e   }} \omega(G,\phi, V(G_m)\backslash \{v_e\}) \Big)
+
z  \Big( \sum_{  \substack{\phi \text{ a $k$-val. of }    G_m \\  \phi\text{ tot. or cr. at } v_e   }} \omega(G,\phi, V(G_m)\backslash \{v_e\}) \Big)
\end{multline}

We show that the sum in the first term in \eqref{dtg} equals $Q(G\ident e )$.
Consider  $G$ locally at an edge $e$, and $G_m$ locally at the corresponding vertex $v_e$ as shown in Table~\ref{p.kval.f1}. 
\begin{table}[ht]
\centering
\begin{tabular}{|c|c|c|c|c|c|}\hline
  & $G$ & $G\ident e$ & $G/e$ & $G\ba e$   & $G\tcon e$\\ \hline
 \raisebox{5mm}{$G$} & 
 \labellist \small\hair 2pt
\pinlabel {$e$}  at 64 39
\endlabellist\includegraphics[scale=.4]{ch1_9m}  & \includegraphics[scale=.4]{medq} &\includegraphics[scale=.4]{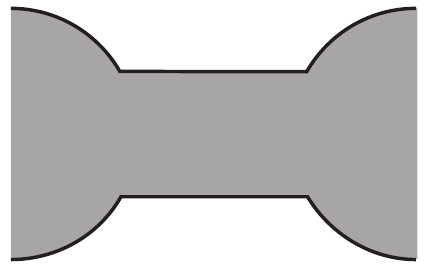}& \includegraphics[scale=.45]{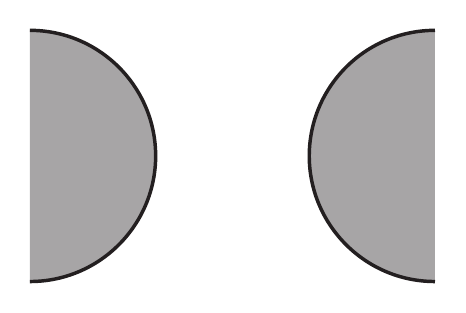}& \includegraphics[scale=.45]{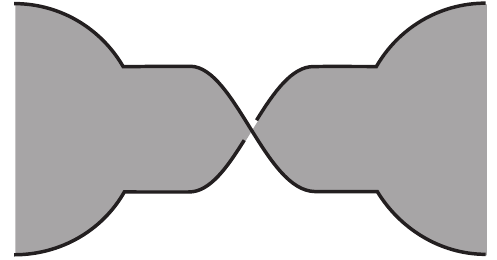} \\ \hline
  \raisebox{5mm}{$G_m$} &\labellist \small\hair 2pt
\pinlabel {$v_e$}  at 78 20
\endlabellist \includegraphics[scale=.35]{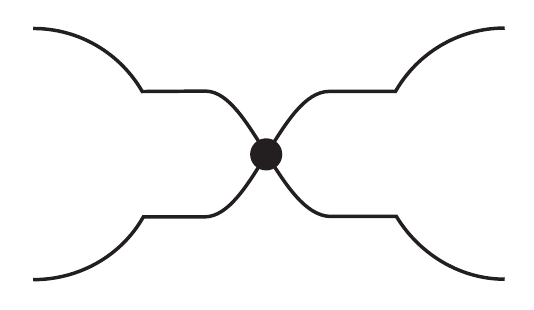}& \includegraphics[scale=.35]{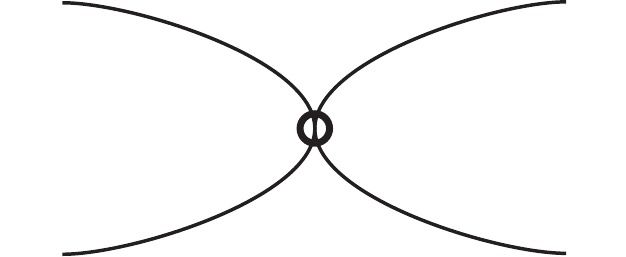}  &\includegraphics[scale=.35]{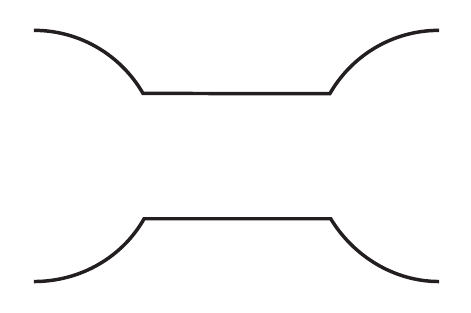}&\includegraphics[scale=.35]{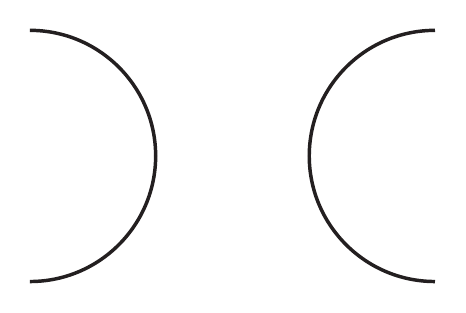}&  \includegraphics[scale=.4]{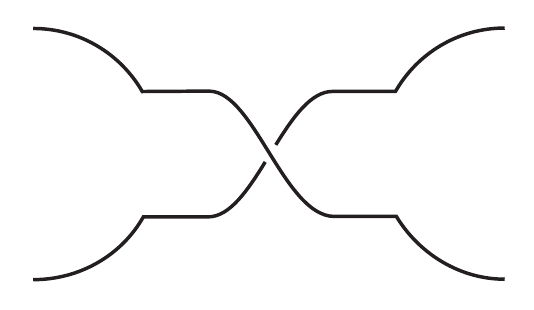} \\ 
\hline
\end{tabular}
\caption{Local differences in $G$, $G\ident e$, $G/e$, $G\ba e$, $G\tcon e$, and their medial graphs for a non-loop edge $e$}
\label{p.kval.f1}
\end{table}
Note that in the table we are not assuming that the vertices at the ends of $e$ are distinct or that they lie on the plane on which they are drawn, and so Table~\ref{p.kval.f1} and the following argument also includes the cases when $e$ is an orientable or non-orientable loop. 
Table~\ref{p.kval.f1} also shows $G\ident e$ and  $(G \ident e)_m$ at the corresponding locations. 
All the embedded graphs in the table are identical outside of the region shown. 
Let $\phi'$ be a $k$-valuation of $(G \ident e)_m$. 
Then the arcs of  $(G \ident e)_m$ shown in Table~\ref{p.kval.f1} are all coloured with the same element $i$.
  The $k$-valuation $\phi'$ of $(G\ident e)_m$ naturally induces a $k$-valuation $\phi$ of $G_m$ in which $(\phi,v_e)$ is total. As this process is reversible, we have a bijection between   the set of all $k$-valuations of  $(G \ident e)_m$, and the set of all $k$-valuations of $G_m$ in which $v_e$ is total. Thus
 \begin{align*}
 \sum_{  \substack{\phi \text{ a $k$-val. of }    G_m \\  \phi\text{ tot. } v_e   }} \omega(G,\phi, V(G_m)\backslash \{v_e\}) 
   &=  \sum_{ \phi  \text{ a $k$-val. of }    (G\ident e)_m  } \omega (G\ident e,\phi, V((G\ident e)_m) 
   \\& = \Omega (G\ident e; w,x,y,z, k  ),
 \end{align*}
where the second equality follows by the inductive hypothesis.

Similar arguments give that the second, third, and fourth sums in \eqref{dtg} equal $ \Omega (G/ e)$, $ \Omega (G\ba e)$  and $ \Omega (G\tcon e)$ respectively. Thus Equation \eqref{dtg}  equals $w{\Omega}(G\ident e)  + x{\Omega}(G/e) + y{\Omega}(G\ba e) + z{\Omega}(G\tcon e)$, which
 is   $\Omega(G)$.
\end{proof}

The significance of Theorem~\ref{t.1} is that it provides a recursive, skein-theoretic way to compute the  generating function $\Omega_k$ for the  $k$-valuations of Definition~\ref{d.2}.
\begin{corollary}\label{sf}
Let $G$ be a ribbon graph or a cellularly embedded graph,  and let $k$ be a natural number. Then 
\[ \Omega_k (G; w, x,y,z) = {\Omega}(G;(w -x-y-z) ,x,y,z,k) . \] 
In particular,  $\Omega_k (G; w, x,y,z)$ is a polynomial in $k$.
\end{corollary}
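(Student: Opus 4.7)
The plan is to recognise this corollary as an essentially immediate substitution into Theorem~\ref{t.1}. The key observation is that Theorem~\ref{t.1} gives $\Omega(G;w,x,y,z,k)$ as a sum over $k$-valuations weighted by $(w+x+y+z)^{\total(\phi)}x^{\white(\phi)}y^{\black(\phi)}z^{\cross(\phi)}$, whereas Definition~\ref{d.2} defines $\Omega_k(G;w,x,y,z)$ by the same sum but with weight $w^{\total(\phi)}x^{\white(\phi)}y^{\black(\phi)}z^{\cross(\phi)}$. To convert one to the other, one merely needs to arrange that $w+x+y+z$ evaluates to the desired $w$; this is achieved by the substitution $w \mapsto w-x-y-z$, since then $(w-x-y-z)+x+y+z = w$.

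Concretely, first I would note that a ribbon graph (or cellularly embedded graph) is a special case of an edge-point ribbon graph (one with no singular points), so that Theorem~\ref{t.1} applies without modification. Next, I would substitute $w-x-y-z$ for the first variable in Theorem~\ref{t.1} to obtain
\[
\Omega(G;w-x-y-z,x,y,z,k) \;=\; \sum_{\phi}\, w^{\total(\phi)}\, x^{\white(\phi)}\, y^{\black(\phi)}\, z^{\cross(\phi)},
\]
where the sum ranges over $k$-valuations $\phi$ of $G_m$. By Definition~\ref{d.2}, the right-hand side is exactly $\Omega_k(G;w,x,y,z)$, which establishes the identity.

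For the final claim that $\Omega_k(G;w,x,y,z)$ is a polynomial in $k$, I would invoke Theorem~\ref{qws}, which expresses $\Omega(G;w,x,y,z,t)$ as a finite sum of monomials in $w,x,y,z,t$ indexed by ordered $4$-partitions of $E(G)$. In particular, $\Omega(G;w,x,y,z,t)$ is a polynomial in $t$, so its evaluation $\Omega(G;w-x-y-z,x,y,z,k)$ at $t=k$ is polynomial in $k$ (indeed a polynomial whose degree is bounded by the number of connected components arising in any boundary-counting term).

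There is no substantive obstacle here; the corollary is a direct combination of Theorem~\ref{t.1} and Theorem~\ref{qws}. The only minor point worth articulating carefully in the write-up is that the identity $\Omega_k(G) = \Omega(G; w-x-y-z, x,y,z,k)$, although derived for positive integer $k$ via the valuation interpretation, lifts to the identity of polynomials in $k$ precisely because both sides agree at infinitely many values of $k$ and the right-hand side is a polynomial in $k$ by Theorem~\ref{qws}.
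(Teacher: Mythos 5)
Your proposal is correct and matches the paper's (implicit) argument: Corollary~\ref{sf} is exactly the substitution $w\mapsto w-x-y-z$ in Theorem~\ref{t.1}, after noting that a ribbon graph is an edge-point ribbon graph with no singular points, with polynomiality in $k$ coming from the state sum of Theorem~\ref{qws}. The closing remark about agreement at infinitely many values of $k$ is unnecessary but harmless, since the right-hand side is already a polynomial in $k$ by construction.
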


The \emph{(geometric) dual}, $G^*$ of a ribbon graph $G=(V(G),E(G))$ is  constructed as follows. Recalling that, topologically, a ribbon graph is a surface with boundary, we cap off the holes using a set of discs, denoted by $V(G^*)$, to obtain a surface without boundary. Then $G^*=(V(G^*),E(G))$ is the ribbon graph obtained by removing the original vertices. 

\begin{corollary}\label{gsi}
Let $G$ be a ribbon graph or an embedded graph. Then 
\[  {\Omega}(G; -2,1,0,1,\lambda)= \sum_{A\subseteq E(G)} \chi((G^{\tau(A)})^*;\la)  ,\]
where $\chi(H;\la)$ is the chromatic polynomial of  $H$.
\end{corollary}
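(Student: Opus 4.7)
The plan is to use Corollary~\ref{sf} to interpret $\Omega(G;-2,1,0,1,\la)$ as a count of certain $\la$-valuations, then partition that count according to where the crossing configurations occur, and finally match each term with the chromatic polynomial of the dual of a partial Petrial.

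By Corollary~\ref{sf}, $\Omega(G;-2,1,0,1,\la)=\Omega_\la(G;0,1,0,1)$, and Definition~\ref{d.2} rewrites this as $\sum_{\phi}0^{\total(\phi)}1^{\white(\phi)}0^{\black(\phi)}1^{\cross(\phi)}$. The factors of $0$ kill every summand except those with $\total(\phi)=\black(\phi)=0$, so the expression is exactly the number of $\la$-valuations of $G_m$ for which every vertex of $G_m$ is either white or crossing. Grouping such a $\phi$ by the set $A(\phi):=\{e\in E(G):v_e\text{ is a crossing vertex of }\phi\}$, the desired identity reduces to the claim that, for each fixed $A\subseteq E(G)$, the number of $\la$-valuations of $G_m$ that are crossing on $\{v_e:e\in A\}$, white on $\{v_e:e\notin A\}$, and have no total vertex equals $\chi((G^{\tau(A)})^*;\la)$.

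To establish this claim I would forget the edge colours and retain only the local pairing at each medial vertex. A white configuration at $v_e$ pairs the ``top two'' and the ``bottom two'' of the four arcs incident to $v_e$ (with respect to the canonical checkerboard colouring), while a crossing configuration pairs the two diagonals; following these pairings through $G_m$ decomposes its edges into a disjoint union of closed curves. A standard ribbon-graph fact is that the all-white resolution of $G_m$ traces the boundary circles of $G$, and since the partial Petrial of $G$ at $e$ twists $e$ and therefore interchanges the white and crossing pairings at $v_e$, the ``crossing on $A$, white on the complement'' resolution of $G_m$ traces exactly the boundary circles of $G^{\tau(A)}$. These curves are thus in natural bijection with $V((G^{\tau(A)})^*)$, and a $\la$-valuation of $G_m$ with the prescribed configuration pattern amounts to an assignment of a colour from $[\la]$ to each curve.

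It remains to translate the ``no total'' constraint. At each vertex $v_e$ of $G_m$, the two curves passing through $v_e$ are precisely the two boundary circles of $G^{\tau(A)}$ on either side of $e$, that is, the two endpoints in $(G^{\tau(A)})^*$ of the dual edge associated to $e$; and the configuration at $v_e$ is total exactly when these two curves share a colour. Hence forbidding total configurations at every $v_e$ is exactly the requirement that the colouring of $V((G^{\tau(A)})^*)$ is proper, so the count is $\chi((G^{\tau(A)})^*;\la)$, and summing over $A\subseteq E(G)$ yields the identity. The main obstacle is the ribbon-graph geometry invoked above, namely the identification of the closed curves produced by our resolution of $G_m$ with the boundary circles of $G^{\tau(A)}$; this rests on the correspondence between medial-graph smoothings and partial Petriality that will need to be verified carefully (degenerate situations with free-loops from isolated vertices, and self-loops in the dual that force $\chi=0$ on one side and a forced total configuration on the other, will need a brief check to see that both sides still agree).
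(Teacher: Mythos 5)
Your proof is correct and follows essentially the same route as the paper's: specialise via Corollary~\ref{sf}, partition the surviving (white-or-crossing) $k$-valuations by their crossing set $A$, identify the resulting colour-cycles with the boundary components of the partial Petrial $G^{\tau(A)}$, and read off proper vertex colourings of $(G^{\tau(A)})^*$. The only step the paper makes explicit that you leave implicit is that the identity, once verified for every natural number value of $\lambda$, extends to the stated polynomial identity because both sides are polynomials in $\lambda$ (using the polynomiality assertion of Corollary~\ref{sf}).
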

\begin{proof}
By Theorem~\ref{t.1}, for each $k$,  ${\Omega}(G; -2,1,0,1,\lambda)$ counts the number of $k$-valuations of $G_m$ in which each vertex is either white or crossing. We restrict to this type of  $k$-valuation in the remainder of the proof.

A {\em proper boundary $k$-colouring } of a ribbon graph is a map from its set of boundary components to $[k]$ with the property that whenever  two boundary components share a common edge, they are assigned different colours.

Let $A_{\phi}\subseteq E(G)$ be the set of edges corresponding to vertices of $G_m$ with crossings in the  $k$-valuation $\phi$.  The cycles in $G_m$   determined by the colours  in the $k$-valuation $\phi$ follow exactly the boundary components of the partial Petrial $G^{\tau(A_{\phi})}$. Moreover, the colours of the  cycles in the $k$-valuation induce a colouring of the boundary components of $G^{\tau(A_{\phi})}$. It is easy to see that this establishes a 1-1 correspondence between $k$-valuations of $G_m$ in which each vertex is either white or crossing, and proper boundary colourings of $G^{\tau(A_{\phi})}$. However, since the boundary components of a ribbon graph $H$ correspond with the vertices of its dual $H^*$, it follows that there is a  1-1 correspondence between proper boundary colourings of $G^{\tau(A_{\phi})}$, and proper vertex colourings of $(G^{\tau(A_{\phi})})^*$.  It follows that $\Omega_k (G; 0,1,0,1)  =  \sum_{A\subseteq E(G)} \chi((G^{\tau(e)})^*;k)$. Since this is true for each natural number $k$, and using Corollary~\ref{sf}, the result about    $ \Omega(G; -2,1,0,1,\lambda)$ follows.
\end{proof}

It is interesting to compare Corollary~\ref{gsi} with Theorem~5.3 of \cite{MR3326433} which gives that $  \Omega(G;0,1,0,-1,\la)= \sum_{A\subseteq E(G)}  (-1)^{ |A|}  \chi ((   G^{\tau(A)}   )^*   ;\lambda)$.

\begin{corollary}\label{awe1}
Let $G$ be a plane ribbon graph. Then 
\[ {\Omega}(G; -2,1,0,1,\lambda) = P(G;\la)  ,\]
where $P(G;\la)$ is the Penrose  polynomial.
\end{corollary}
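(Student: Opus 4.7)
The plan is to express both sides of the proposed identity as sums over the same family of $k$-valuations of $G_m$ and then match them via a parity argument specific to plane graphs.

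First I would apply Theorem~\ref{t.1} to each side. Since $-2+1+0+1=0$ and the $y$-coefficient is $0$,
\[
\Omega(G;-2,1,0,1,k) \;=\; \#\{\phi \text{ a } k\text{-valuation of } G_m : \total(\phi) = 0,\ \black(\phi) = 0\}.
\]
Using Proposition~\ref{p.2}(3) on the right-hand side, together with Theorem~\ref{t.1},
\[
P(G;k) \;=\; \Omega(G;0,1,0,-1,k) \;=\; \sum_{\substack{\phi \text{ a } k\text{-valuation of } G_m \\ \total(\phi)=0,\ \black(\phi)=0}} (-1)^{\cross(\phi)}.
\]
Both expressions now sum over the same collection of $k$-valuations, so matching them reduces to showing that $\cross(\phi)$ is even for every such $\phi$ whenever $G$ is plane.

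This parity claim is the heart of the argument, and I would establish it topologically. Because $\total(\phi)=0$, each colour $i$ appearing at a vertex $v_e$ of $G_m$ occupies exactly two of the four incident edges; the $k$-valuation condition connects those two $i$-edges into one continuous arc through $v_e$ (turning through an angle at a white or black vertex, passing straight through at a crossing vertex). Hence, for each colour $i$, the colour-$i$ edges of $G_m$ form a disjoint union of \emph{simple} closed curves embedded in the plane. At a crossing vertex with participating colours $i \neq j$ the colour-$i$ and colour-$j$ curves cross transversally, while at white vertices no two different-colour curves meet. Writing $C_i$ for the system of simple closed colour-$i$ curves we therefore obtain $\cross(\phi) = \sum_{i<j} \iota(C_i,C_j)$, where $\iota$ counts transverse intersection points in the plane. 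The Jordan curve theorem implies that any two simple closed curves in the plane intersect transversally in an even number of points, so each $\iota(C_i,C_j)$ is even, and consequently $\cross(\phi)$ is even.

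Once the parity is in hand, $(-1)^{\cross(\phi)}=1$ throughout the summation set, so the two displayed expressions agree at every $k \in \mathbb{N}$; since both sides of the proposed identity are polynomials in $\lambda$ of bounded degree, agreement at all natural numbers upgrades to the polynomial identity $\Omega(G;-2,1,0,1,\lambda)=P(G;\lambda)$. The main obstacle is the parity lemma, and in particular the careful verification that excluding total configurations forces each monochromatic subgraph to be a disjoint union of \emph{embedded} simple closed curves, so that the classical plane intersection parity applies cleanly to every pair of colour classes.
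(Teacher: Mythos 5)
Your proposal is correct, but it takes a genuinely different route from the paper. The paper's proof is a one-liner: it invokes Corollary~\ref{gsi} to write $\Omega(G;-2,1,0,1,\lambda)=\sum_{A\subseteq E(G)}\chi((G^{\tau(A)})^*;\lambda)$ and then cites the result of \cite{MR2994409} that this partial-petrial/chromatic sum equals the Penrose polynomial of a plane graph. You instead combine Proposition~\ref{p.2}(3) with Theorem~\ref{t.1} to express both $P(G;k)$ and $\Omega(G;-2,1,0,1,k)$ as sums over the \emph{same} set of $k$-valuations (those with no total and no black configurations), differing only by the sign $(-1)^{\cross(\phi)}$, and then kill the sign by the classical Penrose parity argument: with total configurations excluded, each colour class decomposes into pairwise disjoint simple closed curves in the plane, distinct colour classes meet transversally exactly at crossing vertices (after the standard smoothing at white vertices, where the two strands only touch and can be separated), and the Jordan curve theorem forces each pairwise intersection number, hence $\cross(\phi)$, to be even. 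Your version is more self-contained at this step and has the virtue of making the planarity hypothesis transparent --- it enters precisely through the even-intersection property of simple closed curves in the plane, which fails on higher-genus surfaces and thus explains why the corollary does not extend; the paper's version is shorter and reuses machinery it has already set up (Corollary~\ref{gsi}) together with an external identity. The only point to polish in a write-up is the phrase ``at white vertices no two different-colour curves meet'': the two strands do pass through the same vertex point, so you should say explicitly that the intersection there is non-transversal and removable by a small perturbation, so that only the crossing vertices contribute to the mod-2 intersection count.
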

\begin{proof}
This follows from Corollary~\ref{gsi} since it was shown in \cite{MR2994409} that $\sum_{A\subseteq E(G)} \chi((G^{\tau(A)})^*;\la)$ equals the Penrose polynomial of a plane graph.
\end{proof}

\begin{corollary}\label{awe2}
If $G$ is a cubic ribbon graph then 
\[  {\Omega}(G; -2,1,0,1,3) =\# \text{proper edge 3-colourings of } G.\]
\end{corollary}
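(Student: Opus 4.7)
The plan is to use Theorem~\ref{t.1} to reinterpret $\Omega(G;-2,1,0,1,3)$ as a restricted count of $3$-valuations of $G_m$, and then to exhibit a bijection between these and proper edge $3$-colourings of $G$ via the triangle decomposition of the medial graph of a cubic ribbon graph.

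First, with $(w,x,y,z,k)=(-2,1,0,1,3)$ we have $w+x+y+z=0$ and $y=0$, so in the state-sum of Theorem~\ref{t.1} the only surviving $3$-valuations $\phi$ are those with no total and no black vertices, and each such $\phi$ contributes $x^{\white(\phi)} z^{\cross(\phi)} = 1$. Hence $\Omega(G;-2,1,0,1,3)$ equals the number of $3$-valuations of $G_m$ whose configurations at every vertex are white or crossing, and it remains to biject these with proper edge $3$-colourings of $G$.

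The key structural remark is that at each vertex $u$ of a cubic ribbon graph $G$, the three corner arcs of $G_m$ at $u$ form a triangle $T_u$ whose vertices are the three medial vertices $v_e$ for $e$ incident with $u$. Every medial vertex $v_e$ lies in exactly the two triangles $T_u,T_w$ corresponding to the two ends of $e$, contributing two of its four half-edges to each. Inspecting Figure~\ref{f.kval} shows that the white and crossing configurations at $v_e$ are precisely those in which the two half-edges of $v_e$ inside each incident triangle carry different colours, while the black and total configurations are those in which these two half-edges share a colour. Therefore, in a white/crossing $3$-valuation $\phi$, each triangle $T_u$ inherits a proper edge $3$-colouring of itself, which forces its three edges to use all three colours; moreover the edge of $T_u$ opposite $v_{e_i}$ is coloured by the unique colour missing at $v_{e_i}$ in $\phi$.

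Defining $c(e)$ to be the colour missing at $v_e$, the preceding paragraph shows that at any vertex $u$ with incident edges $e_1,e_2,e_3$ the three values $c(e_1),c(e_2),c(e_3)$ are precisely the three edge colours of $T_u$, hence a permutation of $\{1,2,3\}$, so $c$ is a proper edge $3$-colouring of $G$. Conversely, starting from a proper edge $3$-colouring $c$ of $G$, colouring the edge $v_{e_i}v_{e_j}$ of every $T_u$ by the element of $\{1,2,3\}\setminus\{c(e_i),c(e_j)\}$ produces a $3$-valuation of $G_m$ whose configurations are all white or crossing; properness of $c$ is what guarantees that no black or total configuration arises. These two constructions are mutually inverse because every edge of $G_m$ belongs to exactly one triangle $T_u$. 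The only delicate step I anticipate is the local verification in the third paragraph: one must match the cyclic positions of the four half-edges at $v_e$ in Figure~\ref{f.kval} with their partition into the two incident triangles of $G_m$, which requires a careful bookkeeping of the canonical checkerboard colouring around $v_e$.
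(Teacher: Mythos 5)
Your proposal is correct, and the first step (using Theorem~\ref{t.1} with $w+x+y+z=0$ and $y=0$ to reduce to counting $3$-valuations of $G_m$ with only white and crossing configurations) is exactly how the paper begins, via the proof of Corollary~\ref{gsi}. The second half, however, takes a genuinely different route. The paper identifies white/crossing $3$-valuations with proper boundary $3$-colourings of the partial Petrials $G^{\tau(A)}$ (Corollary~\ref{gsi}) and then invokes Penrose's bijection between these and proper edge $3$-colourings of a cubic graph (Figure~\ref{stag}); you instead biject white/crossing $3$-valuations with proper edge $3$-colourings directly, using the fact that the black faces of $G_m$ at the vertices of a cubic $G$ are triangles. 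Your ``delicate'' local step does check out: a $k$-valuation forces the colour multiset at a $4$-valent vertex to be $\{i,i,i,i\}$ or $\{i,i,j,j\}$, so the pair of half-edges at each black corner of $v_e$ is bichromatic precisely in the white and crossing cases and monochromatic precisely in the black and total cases; this is what makes each $T_u$ properly edge-coloured and the missing-colour map $c$ well defined, proper, and invertible. Unwound, your bijection is the composite of the paper's two (the white-versus-crossing choice at $v_e$ records whether $e$ lies in the Petrial set $A$, and the colour classes of $\phi$ trace the boundary components of $G^{\tau(A)}$), but your version is self-contained and avoids partial Petrials and geometric duals altogether, which is a real simplification for the cubic case. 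One caveat: as written you assume each $T_u$ is an honest triangle on three distinct medial vertices, which fails when $G$ has a loop; the statement still holds there because a loop creates a medial edge whose two half-edges at $v_e$ share a black corner, so no white/crossing valuation exists, matching the absence of proper edge $3$-colourings --- but this degenerate case deserves a sentence.
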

\begin{proof}
This argument is an adaptation of the argument in \cite{MR0281657} used to show that the Penrose polynomial counts proper edge 3-colourings of plane graphs. Proper edge 3-colourings of a ribbon graph are in bijection with proper boundary 3-colourings of partial Petrials of that ribbon graph as indicated in Figure~\ref{stag}.  

\begin{figure}[ht]
\centering
\begin{tabular}{ccc}
 \labellist \small\hair 2pt
\pinlabel {$b$}  at 29 109
\pinlabel {$c$}  at 29 30
\pinlabel {$b$}  at 259 109
\pinlabel {$c$}  at 259 30
\pinlabel {$a$}  at 146 74
\endlabellist\includegraphics[scale=.5]{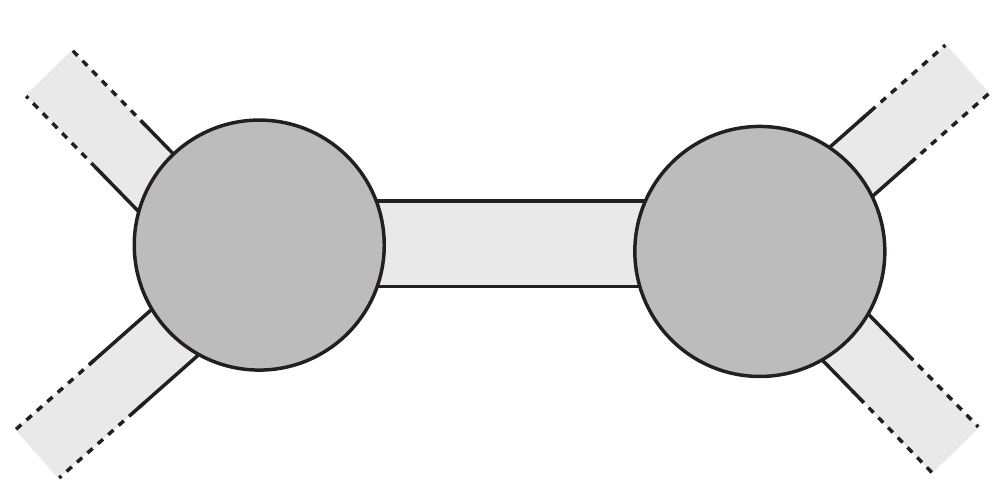} 
 & \raisebox{12mm}{$ \longleftrightarrow$}& 
  \labellist \small\hair 2pt
\pinlabel {$a$}  at 21 72
\pinlabel {$a$}  at 269 72
\pinlabel {$b$}  at 143 35
\pinlabel {$c$}  at 143 106
\endlabellist\includegraphics[scale=.5]{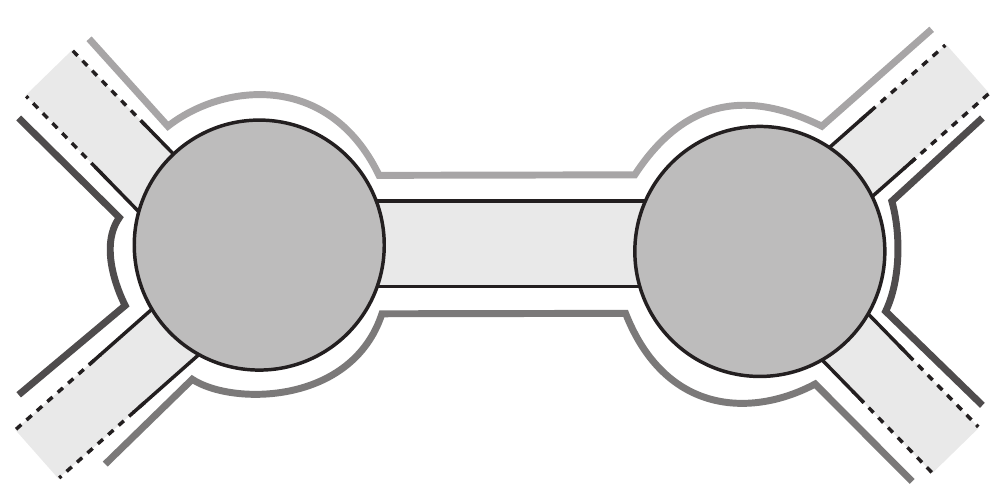} 
\\
 \labellist \small\hair 2pt
\pinlabel {$b$}  at 29 109
\pinlabel {$c$}  at 29 30
\pinlabel {$c$}  at 259 109
\pinlabel {$b$}  at 259 30
\pinlabel {$a$}  at 146 74
\endlabellist\includegraphics[scale=.5]{col1} 
 & \raisebox{12mm}{$ \longleftrightarrow$}& 
  \labellist \small\hair 2pt
\pinlabel {$a$}  at 21 72
\pinlabel {$a$}  at 269 72
\pinlabel {$b$}  at 143 35
\pinlabel {$c$}  at 143 106
\endlabellist\includegraphics[scale=.5]{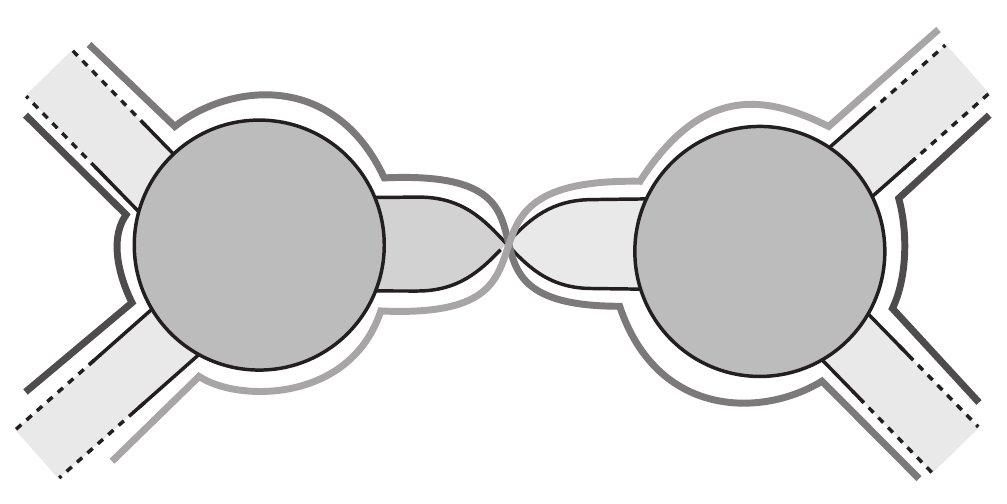} 
\end{tabular}
\caption{Moving between edge 3-colourings and boundary 3-colourings}
\label{stag}
\end{figure}

As  in the proof of Corollary~\ref{gsi}, $\Omega_3 (G; 0,1,0,1)$  counts the number of proper boundary 3-colourings of the partial Petrials of $G$. The result follows. 
\end{proof}

For plane graphs $G$, Corollary~\ref{awe1} implies that $\Omega(G; -2,1,0,1,\lambda)$ coincides with $P(G)$. This is not true, in general, when $G$ in non-plane so Corollary~\ref{awe1} does not give a new interpretation of the topological Penrose polynomial of \cite{MR2994404}.  

However, a direct consequence of Corollary~\ref{awe2} is that $\Omega(G; -2,1,0,1,\lambda )$  (and $\Omega(G; w,x,y,z,t )$)   offers an extension of the original plane Penrose polynomial of \cite{MR1428870,MR0281657} to a topological graph polynomial that counts edge 3-colourings of all (not just cubic) graphs, one of its key properties. With this in mind make the following definition.
\begin{definition}
Let $G$ be an edge-point ribbon graph. We call  
\[
P_p(G;\lambda) := \Omega(G; -2,1,0,1,\lambda )
\]
the \emph{pointed-Penrose} polynomial. 
\end{definition}
Note that  it satisfies the recursion
${P_p}(G)=  {P_p}(G/e)  + \,{P_p}(G\tcon e)-2\,{P_p}(G\ident e)  $.

It would be interesting to determine what other properties of the classical plane Penrose polynomial extend to $P_p$ (a catalog of some properties that do and do not extend to the topological Penrose polynomial may be found in \cite{EMMbook,MR2994409}).  Similarly, a new avenue of investigation would be comparing and contrasting the properties of the pointed Penrose polynomial $P_p$ and the topological Penrose polynomial $P$ for non-planar graphs.  We leave these as open problems.

\section*{Acknowledgements}
This paper arose from conversations between its three authors at the Dagstuhl Seminar 16241,\textit{Graph Polynomials: Towards a Comparative Theory}. They would like to thank Schloss Dagstuhl for providing a  stimulating and productive environment.

Ellis-Monaghan's work on this project was supported by NSF grant DMS-1332411

Louis H. Kauffman is supported by the Laboratory of Topology and Dynamics, Novosibirsk State University
(contract no. 14.Y26.31.0025 with the Ministry of Education and Science of the Russian Federation).

\end{document}